\documentclass[11pt]{amsart}

\usepackage[T1]{fontenc}
\usepackage{lmodern}
\usepackage{amsmath,amssymb,amsthm,mathtools,mathrsfs}
\usepackage{booktabs}
\usepackage{enumitem}
\usepackage{microtype}
\usepackage{xcolor}
\usepackage[a4paper,margin=31mm]{geometry}
\usepackage[numbers,sort&compress]{natbib}
\usepackage[colorlinks=true,linkcolor=blue!55!black,citecolor=blue!55!black,urlcolor=blue!55!black]{hyperref}
\usepackage{doi}
\usepackage[nameinlink,noabbrev]{cleveref}
\hypersetup{
  pdftitle={Discrete q-Hermitian Clifford Analysis},
  pdfauthor={Baruch Schneider; Diana Schneiderova; Yifan Zhang},
  pdfsubject={Discrete q-Hermitian Clifford analysis on multiplicative q-lattices},
  pdfkeywords={discrete q-Hermitian Clifford analysis, Jackson derivative, multiplicative q-lattice, Fischer decomposition, Cauchy--Kovalevskaya extension, q-Fock space}
}

\allowdisplaybreaks
\newtheorem{theorem}{Theorem}[section]
\newtheorem{proposition}[theorem]{Proposition}

\newtheorem{corollary}[theorem]{Corollary}
\theoremstyle{definition}

\theoremstyle{remark}
\newtheorem{remark}[theorem]{Remark}

\newcommand{\C}{\mathbb C}
\newcommand{\N}{\mathbb N}
\newcommand{\Pcal}{\mathcal P}
\newcommand{\Fcal}{\mathcal F}
\newcommand{\Hcal}{\mathcal H}
\newcommand{\Mcal}{\mathcal M}
\newcommand{\Kcal}{\mathcal K}

\newcommand{\im}{\operatorname{im}}

\newcommand{\diag}{\operatorname{diag}}
\newcommand{\qnum}[1]{[#1]_q}
\newcommand{\qfac}[1]{[#1]_q!}
\newcommand{\Dz}{\mathcal D_q}
\newcommand{\Dw}{{\mathcal D_q^{\dagger}}}
\newcommand{\Z}{\mathcal Z}
\newcommand{\Zd}{{\mathcal Z^{\dagger}}}
\newcommand{\hZ}{\widehat{\mathcal Z}}
\newcommand{\hZd}{{\widehat{\mathcal Z}^{\dagger}}}
\newcommand{\hq}{\widehat\rho}
\newcommand{\eps}{\varepsilon}

\title[Discrete $q$-Hermitian Clifford analysis]
{Discrete $q$-Hermitian Clifford Analysis}

\author{Baruch Schneider${}^*$}{\thanks{$\ast$ Corresponding author}}
\address{Department of Mathematics, University of Ostrava, 70200 Ostrava, Czechia}
\email{baruch.schneider@osu.cz}

\author{Diana Schneiderov\'a}
\address{Department of Mathematics, University of Ostrava, 70200 Ostrava, Czechia}
\email{diana.schneiderova@osu.cz}

\author{Yifan Zhang}
\address{Department of Mathematics, University of Ostrava, 70200 Ostrava, Czechia}
\address{Department of Algebra, Charles University, 18675 Prague, Czechia}
\address{Department of Applied Mathematics, VSB--Technical University of Ostrava, 70800 Ostrava, Czechia}
\email{yifan.zhang@osu.cz}

\date{}

\begin{document}

\begin{abstract}
We develop a discrete $q$-Hermitian Clifford calculus based on coordinatewise Jackson differences on multiplicative $q$-lattices.  The resulting Hermitian Jackson--Dirac operators are nilpotent, factor a $q$-Laplacian, and have occupancy-dependent Euler anticommutators.  These give explicit Fischer projectors and homotopies, but the local calculus is not controlled by total degree.  We prove a two-face Cauchy--Kovalevskaya theorem for polynomials and extend it to a $q$-analytic Jackson--Fischer class.  A divided-power conjugation recovers scalar Euler relations and transfers the classical joint Fischer decomposition and dimension formulas.  The Jackson--Fischer completion is a vector-valued $q$-Fock space whose polarized nullspaces have projected reproducing kernels; the joint kernel is obtained by alternating projections.  We also determine the linear symmetry group of the multiplicative lattice.
\end{abstract}

\subjclass[2020]{Primary 30G35; Secondary 39A13, 46E22, 15A66}
\keywords{discrete $q$-Hermitian Clifford analysis, Jackson derivative, multiplicative $q$-lattice, Fischer decomposition, Cauchy--Kovalevskaya extension, $q$-Fock space}

\maketitle

\section{Introduction}\label{sec:introduction}

Hermitian Clifford analysis replaces the Euclidean Dirac operator by a pair of nilpotent first-order operators associated with the complex polarization.  Its basic operator calculus is developed in \cite{BrackxBuresDeSchepperEtAl2007I,BrackxBuresDeSchepperEtAl2007II,BrackxDeSchepperSommen2008}; a concise overview of the classical theory, including Fischer decomposition, Cauchy--Kovalevskaya extension, radial algebra, and integral formulas, is given by Sabadini and Sommen \cite{SabadiniSommen2015}.  We shall also use the classical Fischer decomposition and the Hermitian Cauchy--Kovalevskaya theorem from \cite{BrackxDeSchepperSoucek2010,BrackxDeSchepperLavickaSoucek2011}.  For the radial-algebra and Dirac-complex viewpoints, see \cite{Sommen1997,SabadiniStruppaSommenVanLancker2002,DeSchepperGuzmanSommen2017}; an integral formula of Martinelli--Bochner type was obtained in \cite{SommenPena2007}.

There are several inequivalent discretizations of Clifford analysis.  In the standard additive-lattice theory one uses forward and backward differences together with modified Clifford and Weyl relations; K\"ahler and Sommen \cite{KahlerSommen2015} give an overview including Fischer decomposition, Cauchy--Kovalevskaya extension, Taylor series, and a discrete Cauchy formula.  Related additive-lattice constructions include \cite{FaustinoKahlerSommen2007,DeRidderDeSchepperSommen2011,FaustinoKahler2015}.  A one-dimensional discrete Hermitian Weierstrass transform was studied in \cite{MasseSommenDeRidderRaeymaekers2021}.  Radial $q$-Dirac operators occur in \cite{CoulembierSommen2010,CoulembierSommen2011}, while quantum complex-space Laplacians use noncommuting coordinates and quantum-group covariance \cite{IorgovKlimyk2003}.  Most closely related to the present paper is the Jackson calculus of Zimmermann, Bernstein, and Schneider \cite{ZimmermannBernsteinSchneider2025}, where coordinatewise Jackson differences on commuting real variables lead to an orthogonal $q$-Dirac operator, Fischer decomposition, and a $q$-Cauchy--Kovalevskaya theorem.  The present construction is therefore a discrete $q$-calculus in the multiplicative-lattice sense: the shifts are coordinate dilations rather than translations on an additive lattice.  Accordingly, throughout this paper ``discrete'' refers to this multiplicative $q$-lattice setting, not to an additive $h$-lattice.

Here we organize coordinatewise Jackson differences into a Hermitian pair on
\[
 \C[z_1,\ldots,z_m,w_1,\ldots,w_m]\otimes\bigwedge\C^m,
\]
where $z$ and $w$ are algebraically independent.  With the usual creation and contraction operators $\eps_i$ and $\iota_i$, set
\[
 \Dz=\sum_{i=1}^m\eps_iD_{q,z_i},
 \qquad
 \Dw=\sum_{i=1}^m\iota_iD_{q,w_i}.
\]
Both operators square to zero and
\[
 \{\Dz,\Dw\}=\sum_{i=1}^mD_{q,z_i}D_{q,w_i}.
\]
The main difference from the classical Hermitian algebra appears when these operators are paired with the local coordinate variables
\[
 \Z=\sum_i\iota_i z_i,
 \qquad
 \Zd=\sum_i\eps_i w_i.
\]
Their same-polarization anticommutators are diagonal, but for $m\ge2$ they are not functions only of total polynomial and spinor degree.  On $z^\alpha w^\beta e_S$, the first one has eigenvalue
\[
 \sum_i[\alpha_i+\mathbf1_{i\in S}]_q,
\]
and the second has the analogous $w$-spectrum.  Dependence of the Jackson Euler eigenvalue on the distribution of polynomial degree already occurs in the orthogonal theory \cite{ZimmermannBernsteinSchneider2025}; the extra dependence on spin occupancy is specific to the Hermitian pair considered here.

These diagonal anticommutators give explicit one-polarization Fischer projectors and contracting homotopies.  They also determine the loss of full unitary symmetry: the linear normalizer of the coordinate-dilation group is the monomial group.  For the simultaneous equations $\Dz f=\Dw f=0$, we prove a two-face polynomial Cauchy--Kovalevskaya theorem, a branching formula for the boundary data, and a $q$-analytic extension theorem in a Jackson--Fischer coefficient space.  The two-face geometry itself is classical \cite{BrackxDeSchepperLavickaSoucek2011}; the new point is its realization for the Hermitian Jackson pair and the associated convergence estimates.

A divided-power conjugation gives a second, nonlocal coordinate pair for which the Euler relations again depend only on total degree.  Conjugating the classical Hermitian Fischer decomposition then gives a joint Jackson-monogenic decomposition and the classical multiplicity formulas.  The transported and local coordinate systems are different: the former has scalar Euler relations, whereas the latter consists of ordinary coordinate multiplications and Hilbert adjoints of the Jackson derivatives.  The same Jackson--Fischer inner product completes the polynomial space to a vector-valued $q$-Fock space and yields reproducing kernels for the polarized and joint nullspaces.

This construction is also different from the stable transported $q$-Hermitian coordinate calculus of \cite{SchneiderSchneiderovaZhang2026}.  That theory concerns transported Clifford--Weyl products, Capelli--PBW formulas, radial restriction, localization, and stable $U(m)$ covariance.  The present paper keeps the ordinary coordinate product and starts from explicit one-axis dilations on a multiplicative lattice.  Thus the two constructions address different operator algebras, even though both use divided powers at certain stages.

\section{Discrete Jackson calculus and the Hermitian Dirac pair}\label{sec:grid}

\subsection{The multiplicative \texorpdfstring{$q$}{q}-lattice and Jackson differences}

Throughout the paper,
\begin{equation}\label{eq:q-range}
 0<q<1,
 \qquad
 \qnum{r}:=\frac{1-q^r}{1-q},
 \qquad
 \qfac{r}:=\prod_{k=1}^r\qnum{k},
 \qquad \qfac{0}:=1.
\end{equation}
Let
\[
 \Pcal_m:=\C[z_1,\ldots,z_m,w_1,\ldots,w_m].
\]
The two sets of variables commute, and $w$ is kept independent of $z$ until a Hermitian slice is chosen.

For a coordinate $x$, let $T_{q,x}$ denote the dilation
\[
 (T_{q,x}f)(\ldots,x,\ldots)=f(\ldots,qx,\ldots).
\]
The one-sided Jackson derivative is
\begin{equation}\label{eq:jackson-derivative}
 D_{q,x}f:=\frac{T_{q,x}f-f}{(q-1)x}.
\end{equation}
At $x=0$ the right-hand side is interpreted by polynomial continuation.  Its elementary relations are
\begin{equation}\label{eq:jackson-relations}
 D_{q,x}x^r=\qnum{r}x^{r-1},
 \qquad
 D_{q,x}x=1+qxD_{q,x},
 \qquad
 D_{q,x}y=yD_{q,x}\quad(x\ne y).
\end{equation}
Jackson derivatives in different coordinates commute.

The difference calculus is governed by a twisted Leibniz rule.  Let
$f\mapsto f^\star$ be the conjugate-linear involution of $\Pcal_m$ determined
by $z_i^\star=w_i$ and $w_i^\star=z_i$.

\begin{proposition}[Jackson rules and involution]
\label{prop:jackson-product-rule}
For $x\in\{z_1,\ldots,z_m,w_1,\ldots,w_m\}$ and $f,g\in\Pcal_m$,
\begin{align}
 D_{q,x}(fg)
 &= (D_{q,x}f)g+(T_{q,x}f)(D_{q,x}g) \label{eq:left-q-Leibniz}\\
 &= (D_{q,x}f)(T_{q,x}g)+f(D_{q,x}g). \label{eq:right-q-Leibniz}
\end{align}
Moreover,
\begin{equation}\label{eq:dilation-commutation}
 D_{q,x}T_{q,x}=qT_{q,x}D_{q,x},
 \qquad [D_{q,x},T_{q,y}]=0\quad(x\ne y),
\end{equation}
and, for real $q$,
\begin{equation}\label{eq:star-jackson}
 (T_{q,z_i}f)^\star=T_{q,w_i}(f^\star),
 \qquad
 (D_{q,z_i}f)^\star=D_{q,w_i}(f^\star).
\end{equation}
Thus the two-polarization Jackson calculus is stable under the Hermitian
involution on the complexification.
\end{proposition}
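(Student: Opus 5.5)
The plan is to verify every identity directly from the definition \eqref{eq:jackson-derivative}, using that $T_{q,x}$ is an algebra endomorphism of $\Pcal_m$. For \eqref{eq:left-q-Leibniz} I will write
\[
T_{q,x}(fg)-fg=(T_{q,x}f-f)g+(T_{q,x}f)(T_{q,x}g-g).
\]
Dividing by $(q-1)x$ gives the first formula. Splitting instead as $(T_{q,x}f-f)(T_{q,x}g)+f(T_{q,x}g-g)$ gives \eqref{eq:right-q-Leibniz}. Division by $x$ is legitimate in $\Pcal_m$: for any polynomial $h$, $T_{q,x}h-h$ vanishes at $x=0$ and so is divisible by $x$. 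This is exactly the ``polynomial continuation'' convention. Alternatively, both sides are bilinear, so it suffices to check monomials, where the identity reduces to $[r+s]_q=[r]_q+q^r[s]_q$.

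For \eqref{eq:dilation-commutation}, I will check the identity on monomials $x^r\cdot(\text{other variables})$, since all operators involved are linear and act only in the $x$-variable. We have
\[
D_{q,x}T_{q,x}x^r=q^r[r]_qx^{r-1},\qquad qT_{q,x}D_{q,x}x^r=q\cdot q^{r-1}[r]_qx^{r-1},
\]
and these agree. For $x\neq y$, the operators $D_{q,x}$ and $T_{q,y}$ act on different tensor factors of $\Pcal_m$, so they commute. Equivalently, $T_{q,y}$ fixes $x$ and commutes with $T_{q,x}$.

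For \eqref{eq:star-jackson}, I take $f=\sum c_{\alpha\beta}z^\alpha w^\beta$, so that $f^\star=\sum\overline{c_{\alpha\beta}}\,w^\alpha z^\beta$. Then $T_{q,z_i}$ multiplies the coefficient by $q^{\alpha_i}$. Since $q$ is real, conjugation leaves this factor unchanged, and the result equals $T_{q,w_i}$ applied to $f^\star$, which multiplies the $w^\alpha$-exponent term by $q^{\alpha_i}$. Likewise, $D_{q,z_i}$ sends $z^\alpha w^\beta$ to $[\alpha_i]_q z^{\alpha-e_i}w^\beta$ with $[\alpha_i]_q$ real, and starring this matches $D_{q,w_i}(w^\alpha z^\beta)$. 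By conjugate-linearity of $\star$ these monomial checks extend to all of $\Pcal_m$.

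The only point needing care is where reality of $q$ enters, namely that $\overline{[r]_q}=[r]_q$ and $\overline{q^r}=q^r$. The rest is routine.
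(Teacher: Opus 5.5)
Your proof is correct and follows essentially the same route as the paper. The add-and-subtract splitting of $T_{q,x}(fg)-fg$ is exactly how the paper obtains both $q$-Leibniz rules, and your monomial checks of \eqref{eq:dilation-commutation} and \eqref{eq:star-jackson} spell out what the paper calls following ``directly from the definitions'' and ``applying $\star$ to the difference quotient''; your remark that $T_{q,x}h-h$ is divisible by $x$ also justifies the polynomial-continuation convention, which the paper leaves implicit.
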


\begin{proof}
Insert $T_{q,x}(fg)=(T_{q,x}f)(T_{q,x}g)$ in
\eqref{eq:jackson-derivative} and add and subtract either
$(T_{q,x}f)g$ or $f(T_{q,x}g)$.  This gives
\eqref{eq:left-q-Leibniz} and \eqref{eq:right-q-Leibniz}.  The relations in
\eqref{eq:dilation-commutation} follow directly from the definitions.  Since
$q$ is real and the involution exchanges $z_i$ with $w_i$, applying $\star$
to the difference quotient gives \eqref{eq:star-jackson}.
\end{proof}

The geometric meaning of \eqref{eq:jackson-derivative} is discrete.  If
$a=(a_1,\ldots,a_m)$ and $b=(b_1,\ldots,b_m)$ have nonzero coordinates, set
\begin{equation}\label{eq:multiplicative-lattice}
 \Lambda_q(a,b):=
 \left\{(q^{k_1}a_1,\ldots,q^{k_m}a_m,
 q^{\ell_1}b_1,\ldots,q^{\ell_m}b_m):
 k,\ell\in\mathbb Z^m\right\}.
\end{equation}
At every lattice point, $D_{q,z_i}$ compares the value there with the value at the adjacent point obtained by increasing $k_i$ by one; $D_{q,w_i}$ does the same in the conjugate polarization.  If $b=\overline a$, the diagonal sublattice defined by $\ell=k$ lies on the Hermitian slice $w=\overline z$ and is preserved by the paired dilations $T_{q,z_i}T_{q,w_i}$.  An individual dilation $T_{q,z_i}$ or $T_{q,w_i}$ does not preserve that slice; the separate Jackson operators are therefore operators on the complexification, exchanged by the involution $z_i^\star=w_i$.

The lattice \eqref{eq:multiplicative-lattice} is multiplicative rather than additive, and $q$ is both the ratio between adjacent nodes and the parameter in the Jackson relations.  This is different from the additive $h\mathbb Z^{2m}$ calculus reviewed in \cite{KahlerSommen2015}; see also \cite{FaustinoKahlerSommen2007,FaustinoKahler2015,MasseSommenDeRidderRaeymaekers2021}.

For multi-indices $\alpha,\beta\in\N_0^m$, write
\[
 z^\alpha w^\beta:=z_1^{\alpha_1}\cdots z_m^{\alpha_m}
 w_1^{\beta_1}\cdots w_m^{\beta_m},
 \quad
 \qfac{\alpha}:=\prod_i\qfac{\alpha_i},
 \quad
 \alpha!:=\prod_i\alpha_i!.
\]
We denote by $\Pcal_{r,s}$ the subspace of bidegree $(r,s)$.

\subsection{The Hermitian Jackson--Dirac pair}\label{sec:dirac-pair}

Let $\Fcal_m=\bigwedge\C^m$ with its standard orthonormal basis $e_S$, where
$S\subseteq\{1,\ldots,m\}$.  Exterior creation and contraction are denoted by
$\eps_i$ and $\iota_i=\eps_i^*$.  They satisfy
\begin{equation}\label{eq:car}
 \eps_i\eps_j+\eps_j\eps_i=0,
 \qquad
 \iota_i\iota_j+\iota_j\iota_i=0,
 \qquad
 \eps_i\iota_j+\iota_j\eps_i=\delta_{ij}I.
\end{equation}
Put
\begin{equation}\label{eq:occupancy}
 N_i:=\eps_i\iota_i,
 \qquad
 \mathsf N:=\sum_iN_i.
\end{equation}
Thus $N_i$ is the projection onto states containing $i$ and
$\mathsf Ne_S=|S|e_S$.

We work on the spinor-valued polynomial module
\[
 \mathscr P_m:=\Pcal_m\otimes\Fcal_m.
\]
The four odd operators are
\begin{equation}\label{eq:four-native-operators}
 \Dz:=\sum_{i=1}^m\eps_iD_{q,z_i},
 \qquad
 \Dw:=\sum_{i=1}^m\iota_iD_{q,w_i},
 \qquad
 \Z:=\sum_{i=1}^m\iota_i z_i,
 \qquad
 \Zd:=\sum_{i=1}^m\eps_i w_i.
\end{equation}
The first two are the Hermitian Jackson--Dirac pair; the last two are the local coordinate variables.  Their degree shifts on
\begin{equation}\label{eq:graded-sector}
 \mathscr P_{r,s}^n:=\Pcal_{r,s}\otimes\Fcal_m^n
\end{equation}
are
\begin{align*}
 \Dz&:(r,s,n)\mapsto(r-1,s,n+1),
 &\Z&:(r,s,n)\mapsto(r+1,s,n-1),\\
 \Dw&:(r,s,n)\mapsto(r,s-1,n-1),
 &\Zd&:(r,s,n)\mapsto(r,s+1,n+1).
\end{align*}
The dagger labels the conjugate polarization; Hilbert-space adjoints are denoted by $*$ and are computed in \cref{prop:fischer-adjoints}.  We write
\begin{equation}\label{eq:joint-monogenic-sector}
 \Mcal_{q;r,s}^n:=\ker\Dz\cap\ker\Dw\cap\mathscr P_{r,s}^n
\end{equation}
for the joint Jackson-monogenic sector.

Let $\mathfrak c$ denote conjugate polarization on first-order symbols.
It is conjugate-linear and exchanges
$z_i\leftrightarrow w_i$, $D_{q,z_i}\leftrightarrow D_{q,w_i}$, and
$\eps_i\leftrightarrow\iota_i$.  We use $\mathfrak c$ here only for this
algebraic exchange of the two polarizations; the Hilbert-space adjoint is
denoted by $*$.

\begin{proposition}[Conjugate polarization]\label{prop:hermitian-conjugacy}
The two Jackson--Dirac operators, and likewise the two local coordinate
variables, are exchanged by conjugate polarization:
\begin{equation}\label{eq:hermitian-conjugate-pairs}
 \mathfrak c(\Dz)=\Dw,\qquad \mathfrak c(\Z)=\Zd.
\end{equation}
\end{proposition}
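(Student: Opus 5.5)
The plan is to use the definition of $\mathfrak c$ directly. Conjugate polarization is specified on first-order symbols. It is conjugate-linear and exchanges the generators
\[
 z_i\leftrightarrow w_i,\qquad D_{q,z_i}\leftrightarrow D_{q,w_i},\qquad \eps_i\leftrightarrow\iota_i .
\]
I would therefore first make precise how $\mathfrak c$ acts on the products that appear in \eqref{eq:four-native-operators}. Each term is a product of one Clifford generator with one scalar symbol, namely $\eps_iD_{q,z_i}$, $\iota_iD_{q,w_i}$, $\iota_iz_i$ or $\eps_iw_i$. I take $\mathfrak c$ to act factorwise on such two-factor symbols and to extend additively over sums. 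Conjugate-linearity only matters for the coefficients, and these are all $1$, hence real.

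I then compute term by term:
\[
 \mathfrak c(\eps_iD_{q,z_i})=\mathfrak c(\eps_i)\,\mathfrak c(D_{q,z_i})=\iota_iD_{q,w_i},
 \qquad
 \mathfrak c(\iota_iz_i)=\eps_iw_i .
\]
Summing over $i$ gives $\mathfrak c(\Dz)=\Dw$ and $\mathfrak c(\Z)=\Zd$. Because $\mathfrak c$ is an involution on the generators, applying it again returns $\Dz$ and $\Z$, which justifies the word ``exchanged''.

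The one point needing care is the order of the factors. Clifford and scalar symbols act on different tensor factors of $\mathscr P_m=\Pcal_m\otimes\Fcal_m$, so $\eps_i$ commutes with $D_{q,z_i}$ and with $z_i$. Consequently, whether $\mathfrak c$ preserves or reverses the order of a product makes no difference here, and no sign or reordering issue can occur.

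I would also record consistency with Proposition~\ref{prop:jackson-product-rule}. By \eqref{eq:star-jackson}, on the scalar part $\mathfrak c$ restricted to the $D_{q,x}$ agrees with conjugation by the involution $\star$, that is, $(D_{q,z_i}f)^\star=D_{q,w_i}f^\star$. This shows the symbol exchange is compatible with the actual operators when $q$ is real, although the statement itself is purely symbolic.
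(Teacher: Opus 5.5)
Your proposal is correct and follows the paper's proof: apply the exchanges $\eps_i\leftrightarrow\iota_i$, $D_{q,z_i}\leftrightarrow D_{q,w_i}$, $z_i\leftrightarrow w_i$ term by term to \eqref{eq:four-native-operators}. The paper justifies the derivative exchange through \eqref{eq:star-jackson}, which you use only as a consistency check, and your observation that the commuting Clifford and scalar factors make any order reversal harmless is a small, welcome addition.
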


\begin{proof}
Equation \eqref{eq:star-jackson} exchanges the two Jackson derivatives,
and conjugate polarization exchanges creation and contraction.  Applying
these rules term by term to \eqref{eq:four-native-operators} gives
\eqref{eq:hermitian-conjugate-pairs}.  This is the Jackson counterpart of
the conjugate pair of Hermitian Dirac operators reviewed in
\cite{SabadiniSommen2015}.
\end{proof}

\begin{theorem}[Local quadratic algebra]\label{thm:native-algebra}
The operators in \eqref{eq:four-native-operators} satisfy
\begin{align}
 \Dz^2&=\Dw^2=\Z^2=(\Zd)^2=0, \label{eq:nilpotence}\\
 \{\Z,\Zd\}&=\rho:=\sum_{i=1}^mz_iw_i, \label{eq:radius-factor}\\
 \{\Dz,\Dw\}&=\Delta_q:=\sum_{i=1}^mD_{q,z_i}D_{q,w_i}, \label{eq:laplace-factor}\\
 \{\Dz,\Zd\}&=0,
 &\{\Dw,\Z\}&=0. \label{eq:cross-zero}
\end{align}
The two remaining anticommutators are
\begin{align}
 A_q:=\{\Dz,\Z\}
 &=\sum_{i=1}^m\left(N_i+q^{N_i}E_{q,z_i}\right), \label{eq:Aq}\\
 B_q:=\{\Dw,\Zd\}
 &=\sum_{i=1}^m\left(1-N_i+q^{1-N_i}E_{q,w_i}\right), \label{eq:Bq}
\end{align}
where $E_{q,z_i}=z_iD_{q,z_i}$ and $E_{q,w_i}=w_iD_{q,w_i}$.
\end{theorem}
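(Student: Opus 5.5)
The plan is to expand every anticommutator of two sums as a double sum over $i,j$, and to split the terms according to whether the spinor factors anticommute (via \eqref{eq:car}) while the scalar factors commute.

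The basic observation is this. Let $a_i,b_j$ be spinor operators and $X_i,Y_j$ scalar operators on $\Pcal_m$, with $X_i$ commuting with $Y_j$ whenever $i\ne j$. Since the spinor and scalar tensor factors commute,
\[
 \{a_iX_i,b_jY_j\}=a_ib_jX_iY_j+b_ja_iY_jX_i .
\]
When $X_i$ and $Y_j$ commute, this equals $\{a_i,b_j\}X_iY_j$.

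Nilpotence \eqref{eq:nilpotence} follows directly. For example,
\[
 \Dz^2=\tfrac12\{\Dz,\Dz\}=\tfrac12\sum_{i,j}\{\eps_i,\eps_j\}D_{q,z_i}D_{q,z_j}=0,
\]
because Jackson derivatives in different coordinates commute. The same argument works for $\Dw$, $\Z$ and $\Zd$, since coordinates commute. For \eqref{eq:radius-factor}, $z_i$ commutes with $w_j$, so
\[
 \{\Z,\Zd\}=\sum_{i,j}\{\iota_i,\eps_j\}z_iw_j=\sum_i z_iw_i.
\]
For \eqref{eq:laplace-factor}, the operators $D_{q,z_i}$ and $D_{q,w_j}$ always commute because they act in different variables, so the anticommutator is $\sum_{i,j}\delta_{ij}D_{q,z_i}D_{q,w_j}=\Delta_q$. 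For \eqref{eq:cross-zero}, the operators $D_{q,z_i}$ and $w_j$ commute by \eqref{eq:jackson-relations}, so the anticommutator reduces to $\sum\{\eps_i,\eps_j\}(\cdots)=0$. The same holds for $\{\Dw,\Z\}$.

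The real content is in \eqref{eq:Aq} and \eqref{eq:Bq}, where the scalar factors fail to commute on the diagonal. For $A_q$, the off-diagonal terms $i\ne j$ give $\{\eps_i,\iota_j\}D_{q,z_i}z_j=0$. The diagonal terms give
\[
 \eps_i\iota_i\,D_{q,z_i}z_i+\iota_i\eps_i\,z_iD_{q,z_i}
 =N_i(1+qE_{q,z_i})+(1-N_i)E_{q,z_i},
\]
using $D_{q,x}x=1+qxD_{q,x}$ from \eqref{eq:jackson-relations} and $\iota_i\eps_i=1-N_i$. This equals $N_i+(1-N_i+qN_i)E_{q,z_i}$. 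Since $N_i$ is a projection, $1-N_i+qN_i=q^{N_i}$, which gives \eqref{eq:Aq}.

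For $B_q$ the roles are swapped. The diagonal terms give
\[
 \iota_i\eps_i\,D_{q,w_i}w_i+\eps_i\iota_i\,w_iD_{q,w_i}
 =(1-N_i)(1+qE_{q,w_i})+N_iE_{q,w_i}.
\]
This equals $(1-N_i)+q^{1-N_i}E_{q,w_i}$, again using the fact that $1-N_i$ is a projection.

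The only delicate point is to keep the operator order in the diagonal terms: $D_{q,z_i}z_i$ and $z_iD_{q,z_i}$ differ, and this difference is exactly what produces the occupancy-dependent factor $q^{N_i}$. The functional-calculus identity $q^{P}=1-P+qP$ for a projection $P$ must also be stated explicitly.
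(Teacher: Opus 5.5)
Your proposal is correct and follows essentially the same route as the paper. Both arguments expand each anticommutator as a double sum and use the canonical anticommutation relations to kill every term whose bosonic factors commute. On the diagonal of $\{\Dz,\Z\}$ and $\{\Dw,\Zd\}$, both use $D_{q,x}x=1+qxD_{q,x}$, $\iota_i\eps_i=1-N_i$, and $1-P+qP=q^{P}$ for a projection $P$.
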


\begin{proof}
The square-zero identities follow from the commutativity of Jackson derivatives and coordinates together with \eqref{eq:car}.  In the two mixed products defining \eqref{eq:radius-factor} and \eqref{eq:laplace-factor}, the off-diagonal terms cancel and the diagonal terms reduce to the identity by the third relation in \eqref{eq:car}.  The cross relations \eqref{eq:cross-zero} contain two creation operators or two contractions, while the bosonic factors commute, so the same cancellation applies.

Only the diagonal terms survive in $\{\Dz,\Z\}$.  Using
$D_{q,z_i}z_i=1+qE_{q,z_i}$ and
$\iota_i\eps_i=1-N_i$, their sum is
\[
 \eps_i\iota_i(1+qE_{q,z_i})
 +\iota_i\eps_iE_{q,z_i}
 =N_i+\bigl(qN_i+1-N_i\bigr)E_{q,z_i}.
\]
Because $N_i$ is a projection,
$qN_i+1-N_i=q^{N_i}$, which gives \eqref{eq:Aq}.  The calculation for \eqref{eq:Bq} is identical with creation and contraction exchanged:
\[
 \iota_i\eps_i(1+qE_{q,w_i})
 +\eps_i\iota_iE_{q,w_i}
 =1-N_i+q^{1-N_i}E_{q,w_i}.
\]
\end{proof}

\begin{corollary}[Harmonicity and complexes]\label{cor:harmonicity}
The sequences defined by $\Dz$ and $\Dw$ are complexes, and every joint Jackson-monogenic polynomial is $q$-harmonic:
\[
 \ker\Dz\cap\ker\Dw\subseteq\ker\Delta_q.
\]
\end{corollary}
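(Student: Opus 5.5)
The plan is to read both claims directly off \cref{thm:native-algebra}. First, the complex property. By \eqref{eq:nilpotence} we have $\Dz^2=0$. By the degree shifts listed after \eqref{eq:graded-sector}, $\Dz$ maps $\mathscr P_{r,s}^n$ into $\mathscr P_{r-1,s}^{n+1}$. Hence for fixed $(r+n,s)$ the sequence
\[
 \cdots\longrightarrow\mathscr P_{r,s}^n\xrightarrow{\ \Dz\ }\mathscr P_{r-1,s}^{n+1}\xrightarrow{\ \Dz\ }\mathscr P_{r-2,s}^{n+2}\longrightarrow\cdots
\]
is a complex: the composite of two consecutive arrows is the restriction of $\Dz^2=0$. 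The same reasoning applies to $\Dw$, which maps $\mathscr P_{r,s}^n$ into $\mathscr P_{r,s-1}^{n-1}$ and satisfies $\Dw^2=0$. This gives complexes indexed by $r$ and by $s+(m-n)$ respectively. The only point to record is that the operators preserve the grading in this way, so the sequences are well defined at each stage; the vanishing of the composites is then immediate.

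Second, the harmonicity claim. Let $f\in\mathscr P_m$ satisfy $\Dz f=\Dw f=0$. By \eqref{eq:laplace-factor},
\[
 \Delta_q f=\{\Dz,\Dw\}f=\Dz(\Dw f)+\Dw(\Dz f)=0 .
\]
One small check is needed here. In \eqref{eq:laplace-factor}, $\Delta_q$ is a scalar operator, meaning it acts on $\Pcal_m\otimes\Fcal_m$ as $\Delta_q\otimes I$. So $\Delta_q f=0$ says exactly that every spinor component of $f$ is $q$-harmonic, which justifies the notation $\ker\Delta_q$ on $\mathscr P_m$. This identification is already part of how the proof of \cref{thm:native-algebra} produces \eqref{eq:laplace-factor}, since there the diagonal terms reduce via $\eps_i\iota_i+\iota_i\eps_i=I$.

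There is no real obstacle. The only step requiring attention is the bookkeeping of gradings for the complexes, and it follows from the degree-shift table. Everything else is a one-line consequence of \eqref{eq:nilpotence} and \eqref{eq:laplace-factor}.
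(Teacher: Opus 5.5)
Your proof is correct and follows the paper's argument, which the paper calls immediate: both claims come from $\Dz^2=\Dw^2=0$ in \eqref{eq:nilpotence} and the factorization $\{\Dz,\Dw\}=\Delta_q$ in \eqref{eq:laplace-factor}. One cosmetic slip: $s+(m-n)$ is constant along a $\Dw$-complex, as is $r$, so it labels the complex rather than indexing its terms, which are indexed by $s$ (or $n$); this does not affect the argument.
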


\begin{proof}
This is immediate from \eqref{eq:nilpotence} and \eqref{eq:laplace-factor}.
\end{proof}

\section{Local Fischer theory and lattice symmetry}\label{sec:homotopy}

\subsection{Euler operators and Fischer homotopies}

The operators $A_q$ and $B_q$ replace the two scalar Euler--spin operators of classical Hermitian Clifford analysis.  Their spectra are particularly simple.

\begin{proposition}[Euler spectra]\label{prop:euler-spectra}
For every monomial spinor $z^\alpha w^\beta e_S$,
\begin{align}
 A_q(z^\alpha w^\beta e_S)
 &=\left(\sum_{i=1}^m\qnum{\alpha_i+\mathbf1_{i\in S}}\right)
 z^\alpha w^\beta e_S, \label{eq:A-spectrum}\\
 B_q(z^\alpha w^\beta e_S)
 &=\left(\sum_{i=1}^m\qnum{\beta_i+1-\mathbf1_{i\in S}}\right)
 z^\alpha w^\beta e_S. \label{eq:B-spectrum}
\end{align}
Moreover,
\begin{equation}\label{eq:commute-homotopy}
 [A_q,\Dz]=[A_q,\Z]=0,
 \qquad
 [B_q,\Dw]=[B_q,\Zd]=0.
\end{equation}
The kernel of $A_q$ consists of the monomials with $\alpha=0$ and $S=\varnothing$; the kernel of $B_q$ consists of those with $\beta=0$ and $S=\{1,\ldots,m\}$.
\end{proposition}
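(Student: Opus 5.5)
The plan is to use the explicit formulas \eqref{eq:Aq} and \eqref{eq:Bq} from \cref{thm:native-algebra} and evaluate them on monomial spinors.

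\emph{Spectra.} By \eqref{eq:jackson-relations}, $E_{q,z_i}z^\alpha w^\beta=\qnum{\alpha_i}z^\alpha w^\beta$. Also $N_ie_S=\mathbf 1_{i\in S}e_S$, so $q^{N_i}$ acts on $e_S$ by $q^{\mathbf 1_{i\in S}}$. The $i$-th summand of $A_q$ therefore has eigenvalue $\mathbf 1_{i\in S}+q^{\mathbf 1_{i\in S}}\qnum{\alpha_i}$. The elementary identity
\[
 \qnum{a+1}=1+q\qnum{a}
\]
turns this into $\qnum{\alpha_i+\mathbf 1_{i\in S}}$ in both cases $\mathbf 1_{i\in S}\in\{0,1\}$, which gives \eqref{eq:A-spectrum}. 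The computation for $B_q$ is the same with $\mathbf 1_{i\in S}$ replaced by $1-\mathbf 1_{i\in S}$ and $\alpha$ by $\beta$, which gives \eqref{eq:B-spectrum}.

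\emph{Kernels.} Since $0<q<1$, we have $\qnum{r}>0$ for $r\ge1$ and $\qnum{0}=0$. A sum of such terms vanishes only if every argument is zero. For $A_q$ this forces $\alpha_i=0$ and $i\notin S$ for all $i$; for $B_q$ it forces $\beta_i=0$ and $i\in S$ for all $i$. The monomial spinors form a basis on which $A_q$ and $B_q$ are diagonal with nonnegative eigenvalues. Hence each kernel is spanned by exactly the monomials with zero eigenvalue.

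\emph{Commutation relations.} This is the only step needing care, since $A_q$ is not a function of total degree. I would check that $\Dz$ preserves each eigenvalue of $A_q$ term by term. The summand $\eps_iD_{q,z_i}$ sends $z^\alpha w^\beta e_S$ with $i\notin S$ to a multiple of $z^{\alpha-\delta_i}w^\beta e_{S\cup\{i\}}$, and kills it if $i\in S$. Under this map the $i$-th argument $\alpha_i+\mathbf 1_{i\in S}$ becomes $(\alpha_i-1)+1$, so it is unchanged; the other coordinates are untouched. If $\alpha_i=0$ the image is zero. Similarly, $\iota_iz_i$ sends $\alpha_i\mapsto\alpha_i+1$ while removing $i$ from $S$, again preserving $\alpha_i+\mathbf 1_{i\in S}$.

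An operator that maps each eigenvector of the diagonal operator $A_q$ to a multiple of an eigenvector with the same eigenvalue commutes with $A_q$. This proves $[A_q,\Dz]=[A_q,\Z]=0$. For $B_q$ one checks that $\iota_iD_{q,w_i}$ (lowering $\beta_i$, removing $i$) and $\eps_iw_i$ (raising $\beta_i$, adding $i$) both preserve $\beta_i+1-\mathbf 1_{i\in S}$. Alternatively, the commutation relations follow abstractly from nilpotence: for instance $[\{\Dz,\Z\},\Dz]=[\Z\Dz,\Dz]+[\Dz\Z,\Dz]=\Z\Dz^2-\Dz^2\Z=0$ by \eqref{eq:nilpotence}, and likewise for the other three. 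I would present this shorter argument and keep the spectral check as confirmation.
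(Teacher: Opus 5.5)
Your proposal is correct and follows essentially the same route as the paper: the eigenvalue computation via $E_{q,z_i}z^\alpha=\qnum{\alpha_i}z^\alpha$ and $N_ie_S=\mathbf 1_{i\in S}e_S$, the abstract derivation of the commutators from $A_q=\Dz\Z+\Z\Dz$ and $\Dz^2=\Z^2=0$, and the identification of the kernels from $\qnum{r}>0$ for $r\ge1$. Your term-by-term check that each summand preserves $\alpha_i+\mathbf 1_{i\in S}$ (respectively $\beta_i+1-\mathbf 1_{i\in S}$) is a valid extra confirmation that the paper leaves implicit.
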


\begin{proof}
Since $E_{q,z_i}z^\alpha=\qnum{\alpha_i}z^\alpha$ and
$N_ie_S=\mathbf1_{i\in S}e_S$, equation \eqref{eq:Aq} has eigenvalue
\[
 \mathbf1_{i\in S}+q^{\mathbf1_{i\in S}}\qnum{\alpha_i}
 =\qnum{\alpha_i+\mathbf1_{i\in S}}
\]
in the $i$th coordinate.  The proof of \eqref{eq:B-spectrum} uses
$1-\mathbf1_{i\in S}+q^{1-\mathbf1_{i\in S}}\qnum{\beta_i}
=\qnum{\beta_i+1-\mathbf1_{i\in S}}$.
The commutators in \eqref{eq:commute-homotopy} also follow abstractly from
$A_q=\Dz\Z+\Z\Dz$, $\Dz^2=\Z^2=0$, and the analogous identities for $B_q$.
All $q$-numbers in \eqref{eq:A-spectrum}--\eqref{eq:B-spectrum} are nonnegative and vanish only at zero, which gives the two kernels.
\end{proof}

Let $A_q^\#$ and $B_q^\#$ denote the diagonal generalized inverses obtained by inverting every positive eigenvalue in \eqref{eq:A-spectrum}--\eqref{eq:B-spectrum} and setting the exceptional zero eigenvalue to zero.

\begin{theorem}[Local orthogonal Fischer projectors]\label{thm:native-projectors}
Define
\begin{align}
 \Pi_z&:=I-\Z A_q^\#\Dz,
 &Q_z&:=\Z A_q^\#\Dz, \label{eq:z-projector}\\
 \Pi_w&:=I-\Zd B_q^\#\Dw,
 &Q_w&:=\Zd B_q^\#\Dw. \label{eq:w-projector}
\end{align}
Then $\Pi_z,Q_z$ and $\Pi_w,Q_w$ are pairs of complementary idempotents.  On every finite bidegree sector,
\begin{align}
 \im\Pi_z&=\ker\Dz,
 &\im Q_z&=\im\Z, \label{eq:z-ranges}\\
 \im\Pi_w&=\ker\Dw,
 &\im Q_w&=\im\Zd. \label{eq:w-ranges}
\end{align}
Consequently,
\begin{align}
 \mathscr P_{r,s}^n
 &=\bigl(\ker\Dz\cap\mathscr P_{r,s}^n\bigr)
 \oplus \Z\bigl(\ker\Dz\cap\mathscr P_{r-1,s}^{n+1}\bigr), \label{eq:z-fischer}\\
 \mathscr P_{r,s}^n
 &=\bigl(\ker\Dw\cap\mathscr P_{r,s}^n\bigr)
 \oplus \Zd\bigl(\ker\Dw\cap\mathscr P_{r,s-1}^{n-1}\bigr). \label{eq:w-fischer}
\end{align}
For the Jackson--Fischer inner product introduced in \cref{sec:fock}, these decompositions are orthogonal and the four operators in \eqref{eq:z-projector}--\eqref{eq:w-projector} are self-adjoint.
\end{theorem}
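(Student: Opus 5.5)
The plan is to use the Hodge-type homotopy generated by the diagonal operator $A_q=\{\Dz,\Z\}$; the $w$-polarization is identical with $(\Dw,\Zd,B_q)$ in place of $(\Dz,\Z,A_q)$. First we record the commutation facts that make the homotopy work. By \cref{prop:euler-spectra}, $A_q$ commutes with $\Dz$ and $\Z$. Since $A_q$ is diagonal in the monomial basis and each of $\Dz,\Z$ maps eigenvectors to eigenvectors with the same eigenvalue, $\Dz$ and $\Z$ also commute with the spectral projection $P_0$ onto $\ker A_q$ and with $A_q^\#$. Moreover $\Dz P_0=0$ and $\Z P_0=0$ on monomials: on $\ker A_q$ ($\alpha=0$, $S=\varnothing$) we have $\Dz z^0w^\beta e_\varnothing=0$ and $\Z z^0w^\beta e_\varnothing=0$, because $\iota_i e_\varnothing=0$.

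Next we derive the basic identity $A_qA_q^\#=A_q^\#A_q=I-P_0$. Expanding $A_q=\Dz\Z+\Z\Dz$ gives
\[
\Dz\Z A_q^\#+\Z A_q^\#\Dz=A_q^\#(\Dz\Z+\Z\Dz)=I-P_0.
\]
From this we check idempotence:
\[
Q_z^2=\Z A_q^\#\Dz\Z A_q^\#\Dz=\Z A_q^\#(I-P_0-\Z\Dz A_q^\#)\Dz=\Z A_q^\#\Dz=Q_z,
\]
using $\Z^2=0$ and $P_0\Dz=\Dz P_0=0$. Complementarity of $\Pi_z=I-Q_z$ is then automatic.

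For the ranges we argue as follows. Clearly $\im Q_z\subseteq\im\Z$. Conversely, for $f=\Z g$,
\[
Q_zf=\Z A_q^\#\Dz\Z g=\Z A_q^\#(A_q-\Z\Dz)g=\Z(I-P_0)g=\Z g.
\]
Also $\Dz\Pi_z=\Dz-\Dz\Z A_q^\#\Dz=\Dz-(I-P_0-\Z A_q^\#\Dz)\Dz=P_0\Dz=0$, so $\im\Pi_z\subseteq\ker\Dz$. If $\Dz f=0$, then $\Pi_zf=f$, which gives $\im\Pi_z=\ker\Dz$. Complementary idempotents then give $\ker\Dz\oplus\im\Z$ on each sector, since all operators preserve the grading up to the stated shifts. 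For \eqref{eq:z-fischer} we still need $\im\Z\cap\mathscr P^n_{r,s}=\Z(\ker\Dz\cap\mathscr P^{n+1}_{r-1,s})$. Given $\Z g$, we replace $g$ by $\Pi_zg$: the difference $Q_zg=\Z(\dots)$ is killed by $\Z$.

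The step I expect to need the most care is the orthogonality/self-adjointness claim, since it depends on the Jackson--Fischer inner product of \cref{sec:fock}. Assuming that product makes monomials $z^\alpha w^\beta e_S$ orthogonal with weights $\qfac{\alpha}\qfac{\beta}$, the adjoint of $D_{q,z_i}$ is multiplication by $z_i$, so $\Dz^*=\sum\iota_iz_i=\Z$ (this is presumably \cref{prop:fischer-adjoints}). Then $A_q$ and $A_q^\#$ are self-adjoint, being diagonal with real eigenvalues in an orthogonal basis. Hence $Q_z^*=\Dz^*A_q^\#\Z^*=\Z A_q^\#\Dz=Q_z$, and self-adjoint complementary idempotents are orthogonal projections, which gives orthogonality of the decompositions. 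The analogous statement $\Dw^*=\Zd$ handles the $w$-side.
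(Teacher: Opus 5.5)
Your proposal is correct and follows essentially the same route as the paper. You commute $A_q^\#$ past $\Dz$ and $\Z$, use that both annihilate the exceptional kernel, and check $Q_z^2=Q_z$, $\Dz\Pi_z=0$, $Q_z\Z=\Z$ and the refinement $\Z g=\Z\Pi_z g$; orthogonality and self-adjointness then come from $\Dz^*=\Z$ and $\Dw^*=\Zd$, as in \cref{prop:fischer-adjoints}.
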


\begin{proof}
We prove the first pair.  By \eqref{eq:commute-homotopy}, the generalized inverse $A_q^\#$ commutes with $\Dz$ and $\Z$.  The exceptional kernel of $A_q$ is annihilated by both of these operators.  Hence, on the whole polynomial module,
\begin{align*}
 Q_z^2
 &=\Z(A_q^\#)^2\Dz\Z\Dz
 =\Z(A_q^\#)^2(A_q-\Z\Dz)\Dz
 =\Z A_q^\#\Dz=Q_z.
\end{align*}
It follows that $\Pi_z=I-Q_z$ is the complementary idempotent.  Moreover,
$\Dz\Pi_z=0$, and $\Pi_z$ fixes every element of $\ker\Dz$.  Thus
$\im\Pi_z=\ker\Dz$.  The range of $Q_z$ is contained in $\im\Z$, while
$Q_z\Z=\Z$ on the source of $\Z$; hence $\im Q_z=\im\Z$.
Finally, if $v$ lies in the source sector of $\Z$, then
$\Z v=\Z\Pi_zv$ because $\Z Q_z=0$.  This gives the refined second summand in \eqref{eq:z-fischer}.  The proof for $B_q,\Dw,\Zd$ is the same.  Orthogonality follows from the adjoint identities proved in \cref{prop:fischer-adjoints}.
\end{proof}

\begin{corollary}[Local little Fischer decomposition]\label{cor:little-fischer}
For every $r,s\ge0$ and $0\le n\le m$,
\begin{equation}\label{eq:little-fischer}
 \mathscr P_{r,s}^n
 =\Mcal_{q;r,s}^n\mathbin{\overset{\perp}{\oplus}}
 \left(
   \Z\mathscr P_{r-1,s}^{n+1}
   +\Zd\mathscr P_{r,s-1}^{n-1}
 \right),
\end{equation}
where a sector with a negative polynomial degree or an invalid spin degree
is understood to be zero.  The sum in parentheses need not be direct.
This is the local Jackson counterpart of the little Fischer decomposition
in classical Hermitian Clifford analysis; see
\cite{BrackxDeSchepperSoucek2010,SabadiniSommen2015}.
\end{corollary}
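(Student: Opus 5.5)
The plan is to obtain \eqref{eq:little-fischer} as an orthogonal-complement statement inside the finite-dimensional Hilbert space $\mathscr P_{r,s}^n$, equipped with the Jackson--Fischer inner product of \cref{sec:fock}. I will take from \cref{prop:fischer-adjoints} that, for this inner product, $\Z$ is the adjoint of $\Dz$ and $\Zd$ is the adjoint of $\Dw$, sector by sector. This is exactly the input already used for the orthogonality clause of \cref{thm:native-projectors}. Concretely, I use
\[
\Z=\Dz^*:\mathscr P_{r-1,s}^{n+1}\to\mathscr P_{r,s}^n,
\qquad
\Zd=\Dw^*:\mathscr P_{r,s-1}^{n-1}\to\mathscr P_{r,s}^n,
\]
and the degree shifts listed in \cref{sec:dirac-pair} are consistent with these source and target sectors.

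The argument then proceeds in three steps.

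First, in a finite-dimensional inner product space, $(\im T^*)^\perp=\ker T$ for any linear map $T$. Applying this to $T=\Dz$ restricted to $\mathscr P_{r,s}^n$, with target $\mathscr P_{r-1,s}^{n+1}$, gives
\[
\bigl(\Z\mathscr P_{r-1,s}^{n+1}\bigr)^\perp=\ker\Dz\cap\mathscr P_{r,s}^n .
\]
The analogous statement with $\Dw$ and $\Zd$ holds in the same way.

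Second, since $(U+V)^\perp=U^\perp\cap V^\perp$, the orthogonal complement of
\[
\Z\mathscr P_{r-1,s}^{n+1}+\Zd\mathscr P_{r,s-1}^{n-1}
\]
equals $\ker\Dz\cap\ker\Dw\cap\mathscr P_{r,s}^n=\Mcal_{q;r,s}^n$.

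Third, finite dimensionality gives $W\oplus W^\perp=\mathscr P_{r,s}^n$ for $W$ the sum in parentheses. This is precisely \eqref{eq:little-fischer}.

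Boundary sectors are handled by the stated convention. If $r=0$ or $n=m$, then $\Dz$ vanishes on $\mathscr P_{r,s}^n$ because its target sector is zero, and the $\Z$-summand is zero. Hence the first step remains true trivially. The same applies to $\Dw$ and $\Zd$ when $s=0$ or $n=0$.

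The only substantive point is that the adjoint relations hold for the Jackson--Fischer product. This is deferred to \cref{prop:fischer-adjoints}, which I assume, together with the fact that different bidegree and spin sectors are orthogonal, so that restricting to a sector is legitimate.

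I expect this sector orthogonality to be the main obstacle. If it had to be checked independently, I would verify it on monomial spinors. The product pairs $z^\alpha w^\beta e_S$ with weights $\qfac{\alpha}\qfac{\beta}$, and the computation reduces to the single-variable identity
\[
\langle x f,g\rangle=\langle f,D_{q,x}g\rangle,
\]
which follows from $D_{q,x}x^{r}=\qnum{r}x^{r-1}$. The occupation parts combine using $\iota_i=\eps_i^*$.

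For the remark that the sum need not be direct, I would only note that
\[
\{\Z,\Zd\}=\rho
\]
from \eqref{eq:radius-factor} produces elements lying in both summands. For example, when both $\Z\Zd u$ and $\Zd\Z u$ are nonzero, the element $\rho u-\Zd\Z u=\Z\Zd u$ exhibits an overlap in suitable sectors. No proof of non-directness is required by the statement.
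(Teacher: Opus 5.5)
Your proof of the orthogonal decomposition is correct and is the paper's argument in dual form. The paper writes $(\ker\Dz\cap\ker\Dw)^\perp=(\ker\Dz)^\perp+(\ker\Dw)^\perp$ on the finite sector and identifies the two summands with the ranges of $\Z$ and $\Zd$ via \cref{prop:fischer-adjoints}. You instead take orthogonal complements of those ranges and intersect them. In finite dimensions these are the same computation, and your treatment of the boundary sectors is fine.

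Your closing remark on non-directness is wrong, although it does not affect your proof of \eqref{eq:little-fischer}. The identity $\rho u-\Zd\Z u=\Z\Zd u$ only restates $\{\Z,\Zd\}=\rho$. It does not produce a nonzero vector lying in both $\Z\mathscr P_{r-1,s}^{n+1}$ and $\Zd\mathscr P_{r,s-1}^{n-1}$, because nothing places $\rho u$ in the range of $\Zd$.

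In fact no such vector exists. Suppose $x=\Z a=\Zd b$. Then $\Z x=\Z^2a=0$ and $\Zd x=(\Zd)^2b=0$ by \eqref{eq:nilpotence}. Hence $\rho x=\Z\Zd x+\Zd\Z x=0$ by \eqref{eq:radius-factor}, and $x=0$ because multiplication by the nonzero polynomial $\rho$ is injective on $\mathscr P_m$.

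So the sum in parentheses is always direct. The statement's phrase ``need not be direct'' is only a cautious non-claim, and you should drop the purported overlap rather than try to justify it.
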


\begin{proof}
On the finite-dimensional sector $\mathscr P_{r,s}^n$,
\[
 (\ker\Dz\cap\ker\Dw)^\perp
 =(\ker\Dz)^\perp+(\ker\Dw)^\perp.
\]
By the Jackson--Fischer adjoint identities in
\cref{prop:fischer-adjoints}, the two summands are respectively the
ranges of $\Z$ and $\Zd$ from the source sectors displayed in
\eqref{eq:little-fischer}.  This proves the orthogonal decomposition.
\end{proof}

\begin{corollary}[Contracting homotopies]\label{cor:contracting-homotopies}
Outside the endpoint $\alpha=0,S=\varnothing$, the $\Dz$-complex is contracted by $\Z A_q^\#$.  Outside $\beta=0,S=\{1,\ldots,m\}$, the $\Dw$-complex is contracted by $\Zd B_q^\#$.
Consequently,
\begin{equation}\label{eq:polarized-cohomology}
 H(\mathscr P_m,\Dz)
 \simeq \C[w_1,\ldots,w_m]e_\varnothing,
 \qquad
 H(\mathscr P_m,\Dw)
 \simeq \C[z_1,\ldots,z_m]e_{\{1,\ldots,m\}}.
\end{equation}
\end{corollary}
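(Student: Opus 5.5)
The plan is to read off everything from the homotopy identity furnished by \cref{prop:euler-spectra} and \cref{thm:native-projectors}. Put $h_z:=\Z A_q^\#$ and let $P_0$ be the projection onto $\ker A_q$, which is spanned by the monomials $w^\beta e_\varnothing$. Since $A_q$ is diagonal on monomial spinors, $A_qA_q^\#=A_q^\#A_q=I-P_0$. By \eqref{eq:commute-homotopy}, $A_q^\#$ commutes with $\Dz$ and $\Z$. Hence
\[
 \Dz h_z+h_z\Dz=\Dz\Z A_q^\#+\Z\Dz A_q^\#=A_qA_q^\#=I-P_0 .
\]
This says exactly that $h_z$ is a contracting homotopy on the complement of the endpoint $\alpha=0$, $S=\varnothing$. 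The same computation with $h_w:=\Zd B_q^\#$, using $\{\Dw,\Zd\}=B_q$ from \eqref{eq:Bq}, gives $\Dw h_w+h_w\Dw=I-P_0'$. Here $P_0'$ is the projection onto the span of $z^\alpha e_{\{1,\ldots,m\}}$.

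For the cohomology, I would first check that $P_0$ is a chain map. The range of $P_0$ lies in $\ker\Dz$: monomials $w^\beta e_\varnothing$ carry no $z$, so every $D_{q,z_i}$ kills them. It also lies outside $\im\Dz$: $\Dz$ raises the spin degree, so its image has $n\ge1$. To see that $P_0$ commutes with $\Dz$, I would use that $\Dz$ commutes with $A_q$, so it preserves eigenspaces; alternatively, note that $\Dz P_0=0$ and that $P_0\Dz=0$ because $\Dz$ maps into spin degree $\ge1$. The homotopy identity then gives $I\simeq P_0$ as chain maps. Therefore $H(\mathscr P_m,\Dz)\cong H(\im P_0,0)=\im P_0=\C[w_1,\ldots,w_m]e_\varnothing$.

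The direct version of the same argument runs as follows. If $\Dz f=0$, then $f=\Dz h_zf+P_0f$, so the class of $f$ is the class of $P_0f$. Conversely, if $P_0f=\Dz g$, then $P_0f=P_0\Dz g=0$, since $P_0\Dz=0$. So the map $f\mapsto P_0f$ on cocycles has kernel exactly the coboundaries, and it is onto $\im P_0$ because every element of $\im P_0$ is a cocycle.

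The $\Dw$ case is identical. $\Dw$ lowers the spin degree, so its image avoids $n=m$, which gives $P_0'\Dw=0$. Also $\Dw$ kills $z^\alpha e_{\{1,\ldots,m\}}$ because no $w$ occurs there. The conclusion is $H(\mathscr P_m,\Dw)\cong\C[z_1,\ldots,z_m]e_{\{1,\ldots,m\}}$.

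The only delicate point is the interaction of $A_q^\#$ with the exceptional kernel in the identity $A_qA_q^\#=I-P_0$. The argument needs $A_q^\#$ to commute with $\Dz$ and $\Z$ also on that kernel. I would settle this by the eigenspace argument: $\Dz$ and $\Z$ commute with the diagonal operator $A_q$, so they preserve each eigenspace, and any function of $A_q$, including $A_q^\#$ and $P_0$, commutes with them. Everything else is bookkeeping of spin degrees.
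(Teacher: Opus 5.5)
Your proposal is correct and follows the paper's argument: the homotopy identity $\Dz\Z A_q^\#+\Z A_q^\#\Dz=A_qA_q^\#=I-P_0$, together with the facts that the exceptional eigenspace consists of cocycles and meets no boundaries for spin-degree reasons. You supply more detail than the paper, namely the eigenspace argument for commuting $A_q^\#$ past $\Dz$ and $\Z$ and the explicit chain-map bookkeeping with $P_0$, but the route is the same.
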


\begin{proof}
The homotopy identities are
\[
 \Dz\Z A_q^\#+\Z A_q^\#\Dz=A_qA_q^\#,
 \qquad
 \Dw\Zd B_q^\#+\Zd B_q^\#\Dw=B_qB_q^\#.
\]
Each right-hand side is the identity off the corresponding exceptional
eigenspace in \cref{prop:euler-spectra}.  Those exceptional spaces are closed
under the relevant differential and cannot be boundaries because they lie at
the bottom, respectively top, spin endpoint.  This proves
\eqref{eq:polarized-cohomology}.
\end{proof}

For local multiplication and the Jackson--Fischer adjoint, the Euler spectrum is not controlled by total degree.

\begin{proposition}[Failure of scalar total-degree closure]\label{prop:non-scalar-euler}
Suppose $m\ge2$ and $q\ne1$.  The operator $A_q$ is not scalar on the fixed sector $\mathscr P_{2,0}^0$, and $B_q$ is not scalar on $\mathscr P_{0,2}^m$.  In particular, neither local mixed anticommutator can be expressed as a function only of the corresponding total polynomial degree and the total spinor number.
\end{proposition}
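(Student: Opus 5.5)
The argument is a direct application of the spectral formulas in \cref{prop:euler-spectra}: we exhibit two monomial spinors in the same sector on which the operator has different eigenvalues. For $A_q$ on $\mathscr P_{2,0}^0$ the spin part is $e_\varnothing$, so $\mathbf 1_{i\in S}=0$ for every $i$. By \eqref{eq:A-spectrum}, $A_q(z^\alpha e_\varnothing)=\bigl(\sum_i\qnum{\alpha_i}\bigr)z^\alpha e_\varnothing$. Since $m\ge2$, the sector contains both $z_1^2e_\varnothing$ and $z_1z_2e_\varnothing$. Their eigenvalues are $\qnum{2}=1+q$ and $\qnum1+\qnum1=2$, which differ because $q\ne1$. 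So $A_q$ has two distinct eigenvalues on this sector and cannot be scalar there.

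For $B_q$ on $\mathscr P_{0,2}^m$ the spin part is $e_{\{1,\ldots,m\}}$, so $1-\mathbf1_{i\in S}=0$ for every $i$. By \eqref{eq:B-spectrum}, the eigenvalue on $w^\beta e_{\{1,\ldots,m\}}$ is $\sum_i\qnum{\beta_i}$. Comparing $w_1^2e_{\{1,\ldots,m\}}$ with $w_1w_2e_{\{1,\ldots,m\}}$ again gives $1+q\ne 2$. Alternatively, one could appeal to the conjugate polarization of \cref{prop:hermitian-conjugacy} to transfer the first case, but computing directly is just as short.

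The final sentence of the statement follows at once. Each of these sectors has a single total polynomial bidegree and a single spinor number $n$, so any operator that is a function only of those quantities acts as a scalar on the sector. We have just shown that $A_q$ and $B_q$ do not. There is no real obstacle here. The only care needed is to use the hypotheses $m\ge2$, which guarantees the mixed monomial exists, and $q\ne1$, which makes $\qnum2\ne2$ since $\qnum2-2=q-1$.
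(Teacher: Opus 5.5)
Your proposal is correct and follows the paper's proof. Both read off the eigenvalues $\qnum2=1+q$ and $2\qnum1=2$ from \eqref{eq:A-spectrum}--\eqref{eq:B-spectrum} on the pairs $z_1^2e_\varnothing$, $z_1z_2e_\varnothing$ and $w_1^2e_{\{1,\ldots,m\}}$, $w_1w_2e_{\{1,\ldots,m\}}$, and both observe that the vectors in each pair have equal total polynomial and spinor degrees.
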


\begin{proof}
On $z_1^2e_\varnothing$ and $z_1z_2e_\varnothing$, formula \eqref{eq:A-spectrum} gives the respective eigenvalues
$\qnum2=1+q$ and $2\qnum1=2$.  They are unequal for $q\ne1$.  On the top-spin vectors $w_1^2e_{\{1,\ldots,m\}}$ and
$w_1w_2e_{\{1,\ldots,m\}}$, equation \eqref{eq:B-spectrum} gives the same two eigenvalues.  Both pairs have equal total polynomial and spinor degrees.
\end{proof}

\begin{remark}[One complex dimension]\label{rem:rank-one}
For $m=1$, write $f=f_0+\eps_1f_1$.  Then
\[
 \Dz f=\eps_1D_{q,z}f_0,
 \qquad
 \Dw f=D_{q,w}f_1.
\]
Hence $f$ is joint monogenic exactly when $f_0$ is independent of $z$ and
$f_1$ is independent of $w$.  On a homogeneous spinor sector the two Euler
spectra are $A_q=\qnum{r+n}$ and $B_q=\qnum{s+1-n}$.  The dependence on the
distribution among coordinate axes therefore begins at $m=2$.  The companion
coordinate introduced in \cref{sec:transport} acts by
$\widehat z z^r=(r+1)z^{r+1}/\qnum{r+1}$.
\end{remark}

\begin{remark}[First nonscalar sector]\label{rem:first-anisotropic}
For $m=2$ and bottom spin, $z_1^2e_\varnothing$ and
$z_1z_2e_\varnothing$ have the same total degree but $A_q$-eigenvalues
$1+q$ and $2$.  The corresponding homotopy coefficients are
$(1+q)^{-1}$ and $1/2$.
\end{remark}

\subsection{Linear symmetry of the multiplicative lattice}\label{sec:symmetry}

Let $T_i\in GL_m(\C)$ be the linear dilation which multiplies the $i$th coordinate by $q$ and fixes the remaining coordinates, and put
\[
 \Gamma_q:=\langle T_1,\ldots,T_m\rangle
 =\{\diag(q^{k_1},\ldots,q^{k_m}):k\in\mathbb Z^m\}.
\]
We call $\Gamma_q$ the coordinate-dilation group of the $z$-lattice.

\begin{theorem}[Normalizer of the coordinate-dilation group]\label{thm:grid-normalizer}
Let $0<q<1$ and $L\in GL_m(\C)$.  The following are equivalent:
\begin{enumerate}[label=(\roman*),leftmargin=2.1em]
 \item $L\Gamma_qL^{-1}=\Gamma_q$;
 \item conjugation by $L$ permutes the set $\{T_1,\ldots,T_m\}$;
 \item $L$ is a monomial matrix, that is, a product of an invertible diagonal matrix and a permutation matrix.
\end{enumerate}
Consequently, the maximal unitary subgroup normalizing the coordinate-dilation group is
\[
 U(1)^m\rtimes\mathfrak S_m,
\]
not $U(m)$ when $m\ge2$.  The paired $(z,w)$ grid has the corresponding Hermitian action
$z\mapsto Lz$, $w\mapsto\overline Lw$ for monomial unitary $L$.
\end{theorem}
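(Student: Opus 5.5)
We prove the cycle (iii)$\Rightarrow$(ii)$\Rightarrow$(i)$\Rightarrow$(iii) and then read off the unitary and Hermitian consequences.

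For (iii)$\Rightarrow$(ii), write $L=DP_\sigma$ with $D$ invertible diagonal and $P_\sigma$ the permutation matrix sending $e_j$ to $e_{\sigma(j)}$. Diagonal matrices commute with every $T_i$, so $LT_iL^{-1}=P_\sigma T_iP_\sigma^{-1}=T_{\sigma(i)}$, a permutation of the generators. The implication (ii)$\Rightarrow$(i) is immediate: if conjugation permutes the generating set, then $L\Gamma_qL^{-1}$ is generated by the same set, hence equals $\Gamma_q$.

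The substantive step is (i)$\Rightarrow$(iii), and I would argue through eigenspaces. Assume $L\Gamma_qL^{-1}=\Gamma_q$. Then $LT_1L^{-1}=\diag(q^{k_1},\ldots,q^{k_m})$ for some $k\in\mathbb Z^m$. Conjugation preserves the spectrum with multiplicities, and $T_1$ has eigenvalue $q$ once and eigenvalue $1$ with multiplicity $m-1$. Since $0<q<1$, the map $k\mapsto q^k$ is injective on $\mathbb Z$, so exactly one $k_j$ equals $1$ and the others vanish. Hence $LT_iL^{-1}=T_{\sigma(i)}$ for some map $\sigma$. This map is injective because conjugation is injective, so $\sigma$ is a permutation, and we have in fact recovered (ii).

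Now $LT_i=T_{\sigma(i)}L$ says that $L$ carries the $q$-eigenspace of $T_i$, namely $\C e_i$, onto the $q$-eigenspace of $T_{\sigma(i)}$, namely $\C e_{\sigma(i)}$. So $Le_i=c_ie_{\sigma(i)}$ with $c_i\ne0$, and $L$ is monomial. The main obstacle is this step, specifically the use of $q\ne1$, $q>0$ to separate the eigenvalue $q$ from $1$ and to exclude coincidences such as $q^{k}=q$ with $k\neq1$. This is exactly where the hypothesis $0<q<1$ enters. For $q=1$ the group $\Gamma_q$ would be trivial and every $L$ would normalize it.

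For the unitary consequence, intersect the monomial group with $U(m)$. A monomial matrix is unitary if and only if its diagonal factor has unimodular entries, which gives $U(1)^m\rtimes\mathfrak S_m$, with $\mathfrak S_m$ acting on $U(1)^m$ by permuting coordinates. For $m\ge2$ this subgroup is proper, since the rotation by $\pi/4$ in the $(e_1,e_2)$-plane is unitary but not monomial. For the paired grid, $\overline L$ is again monomial with the same permutation, so $w\mapsto\overline Lw$ normalizes the $w$-dilation group in the same way. When $L$ is unitary, the action $z\mapsto Lz$, $w\mapsto\overline Lw$ preserves $\rho=\sum_i z_iw_i$ and the slice $w=\overline z$. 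Since $L$ sends $q^{k_i}a_i e_i$ to $c_iq^{k_i}a_i e_{\sigma(i)}$, it maps $\Lambda_q(a,b)$ onto $\Lambda_q(La,\overline Lb)$, and this preserves the diagonal sublattice.
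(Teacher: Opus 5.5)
Your proof is correct and takes essentially the same route as the paper. Comparing spectra gives $LT_iL^{-1}=T_{\sigma(i)}$, matching the $q$-eigenlines $\C e_i\mapsto\C e_{\sigma(i)}$ gives monomiality, and the converse holds because diagonal matrices commute with the $T_i$ while permutation matrices permute them. You are also more explicit than the paper about where $0<q<1$ enters (injectivity of $k\mapsto q^k$) and about the Hermitian action on $\Lambda_q(a,b)$, which the paper's proof does not check.
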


\begin{proof}
Suppose first that $L$ normalizes $\Gamma_q$.  The conjugate $LT_iL^{-1}$ lies in $\Gamma_q$ and has the same spectrum as $T_i$, namely one eigenvalue $q$ and $m-1$ eigenvalues $1$.  The only elements of $\Gamma_q$ with this spectrum are the $T_j$, so conjugation by $L$ permutes their set.  The $q$-eigenspace of $T_i$ is the coordinate line $\C e_i$, while its $1$-eigenspace is the complementary coordinate hyperplane.  If
$LT_iL^{-1}=T_{\sigma(i)}$, then $L$ maps $\C e_i$ onto
$\C e_{\sigma(i)}$.  Hence every column of $L$ has exactly one nonzero entry and the induced map $i\mapsto\sigma(i)$ is a permutation.  Thus $L$ is monomial.  Conversely, a diagonal matrix commutes with every $T_i$ and a permutation matrix permutes them, so every monomial matrix normalizes $\Gamma_q$.  Intersecting the monomial group with $U(m)$ gives the displayed subgroup.
\end{proof}

\begin{remark}[Covariance versus normalization]\label{rem:grid-covariance}
The theorem concerns the underlying lattice and is independent of spinors.  Permutations and diagonal unitary changes can be lifted to the exterior module, so the Hermitian Jackson pair is covariant under the displayed monomial unitary group.  A generic unitary mixing of two coordinates does not preserve the family of one-axis dilations.  The non-scalar spectrum in \cref{prop:non-scalar-euler} is the corresponding operator-level obstruction.
\end{remark}

\section{A two-face \texorpdfstring{$q$}{q}-Cauchy--Kovalevskaya theorem}\label{sec:ck}

The joint system requires data on both polarizations.  Its two-face form is the Jackson analogue of the classical Hermitian CK scheme in \cite{BrackxDeSchepperLavickaSoucek2011}.  We first determine the compatible polynomial data and the resulting finite recursion.  Separate the last complex coordinate and write
\begin{equation}\label{eq:split-dirac}
 \Dz=\Dz'+\eps_mD_{q,z_m},
 \qquad
 \Dw=\Dw'+\iota_mD_{q,w_m},
\end{equation}
where the primed operators use the first $m-1$ coordinates.  Every spinor polynomial has a unique form
\begin{equation}\label{eq:spin-split}
 f=A+\eps_mB,
 \qquad
 A,B\in\C[z,w]\otimes\bigwedge\C^{m-1}.
\end{equation}
Because $m$ is the last exterior label, the canonical anticommutation relations give
\begin{align}
 \Dz f=0
 &\Longleftrightarrow
 \Dz'A=0,
 \quad D_{q,z_m}A=\Dz'B, \label{eq:CK-system-one}\\
 \Dw f=0
 &\Longleftrightarrow
 \Dw'B=0,
 \quad D_{q,w_m}B=-\Dw'A. \label{eq:CK-system-two}
\end{align}

Use Jackson divided powers
\begin{equation}\label{eq:jackson-divided-powers}
 z_m^{\langle r\rangle_q}:=\frac{z_m^r}{\qfac r},
 \qquad
 w_m^{\langle s\rangle_q}:=\frac{w_m^s}{\qfac s},
\end{equation}
so that the corresponding Jackson derivative lowers the index with coefficient one.

\begin{theorem}[Joint Hermitian Jackson--CK extension]\label{thm:joint-CK}
Let
\begin{align}
 a(w_m)&=\sum_{s\ge0}w_m^{\langle s\rangle_q}a_s,
 &\Dz'a_s&=0, \label{eq:a-data}\\
 b(z_m)&=\sum_{r\ge0}z_m^{\langle r\rangle_q}b_r,
 &\Dw'b_r&=0, \label{eq:b-data}
\end{align}
be polynomial data with coefficients in
$\C[z_1,\ldots,z_{m-1},w_1,\ldots,w_{m-1}]
\otimes\bigwedge\C^{m-1}$.
There is a unique joint Jackson-monogenic polynomial
$f=A+\eps_mB$ satisfying
\begin{equation}\label{eq:two-face-boundary}
 A\big|_{z_m=0}=a(w_m),
 \qquad
 B\big|_{w_m=0}=b(z_m).
\end{equation}
Writing
\begin{equation}\label{eq:AB-expansion}
 A=\sum_{r,s\ge0}z_m^{\langle r\rangle_q}w_m^{\langle s\rangle_q}A_{r,s},
 \qquad
 B=\sum_{r,s\ge0}z_m^{\langle r\rangle_q}w_m^{\langle s\rangle_q}B_{r,s},
\end{equation}
the extension is determined by
\begin{align}
 A_{0,s}&=a_s,
 &B_{r,0}&=b_r, \label{eq:CK-boundary-coefficients}\\
 A_{r+1,s}&=\Dz'B_{r,s},
 &B_{r,s+1}&=-\Dw'A_{r,s}. \label{eq:CK-recursion}
\end{align}
Only finitely many coefficients in \eqref{eq:AB-expansion} are nonzero.
\end{theorem}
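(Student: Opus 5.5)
We plan to reduce the joint system to a recursion on the divided-power coefficients. The starting point is the split \eqref{eq:CK-system-one}--\eqref{eq:CK-system-two}. Since the primed operators involve only the first $m-1$ coordinates, they commute with multiplication by $z_m^{\langle r\rangle_q}w_m^{\langle s\rangle_q}$. By \eqref{eq:jackson-relations}, $D_{q,z_m}z_m^{\langle r\rangle_q}=z_m^{\langle r-1\rangle_q}$, and similarly for $w_m$. Substituting \eqref{eq:AB-expansion} and comparing coefficients therefore turns the joint system into four coefficient conditions:
\[
 \Dz'A_{r,s}=0,\qquad A_{r+1,s}=\Dz'B_{r,s},\qquad \Dw'B_{r,s}=0,\qquad B_{r,s+1}=-\Dw'A_{r,s}.
\]
These are required for all $r,s\ge0$. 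The boundary conditions \eqref{eq:two-face-boundary} are exactly $A_{0,s}=a_s$ and $B_{r,0}=b_r$, because $z_m^{\langle r\rangle_q}$ vanishes at $z_m=0$ for $r>0$.

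Uniqueness then follows at once. The recursion \eqref{eq:CK-recursion} determines every $A_{r,s}$ with $r\ge1$ from $B_{r-1,s}$, and every $B_{r,s}$ with $s\ge1$ from $A_{r,s-1}$. An induction on $r+s$ therefore shows that all coefficients are fixed by \eqref{eq:CK-boundary-coefficients}.

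For existence, we define the coefficients by \eqref{eq:CK-boundary-coefficients}--\eqref{eq:CK-recursion}, again by induction on $r+s$. It then remains to check the two constraint equations $\Dz'A_{r,s}=0$ and $\Dw'B_{r,s}=0$. For $r=0$ the first holds by the hypothesis \eqref{eq:a-data}. For $r\ge1$ we have $\Dz'A_{r,s}=(\Dz')^2B_{r-1,s}=0$ by \eqref{eq:nilpotence}, applied to the primed operator. Symmetrically, for $s\ge1$ we get $\Dw'B_{r,s}=-(\Dw')^2A_{r,s-1}=0$, and for $s=0$ the claim is \eqref{eq:b-data}.

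This step is the crux, and it is easier than one might fear. One might expect that compatibility of the two faces would require the anticommutator $\{\Dz',\Dw'\}=\Delta_q'$. It does not, because the two recursions only ever apply each primed operator once before the other is applied, so no such identity is needed. We will double-check that no hidden compatibility arises at the corner $A_{0,0}$, $B_{0,0}$. There is none, because $A_{0,0}=a_0$ and $B_{0,0}=b_0$ are independent data.

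The remaining point is finiteness, which we expect to be the main obstacle to state cleanly. Each application of $\Dz'$ lowers the $z'$-degree (the degree in $z_1,\ldots,z_{m-1}$) by one, and each $\Dw'$ lowers the $w'$-degree by one. Iterating the recursion gives
\[
 A_{r,s}=(-1)^{\cdots}(\Dz'\Dw')^{k}\cdots,
\]
that is, $A_{r,s}$ is obtained from $a_{s-r}$ or $b_{r-1-s}$ by an alternating word in $\Dz',\Dw'$ of length about $r+s$, and the same holds for $B_{r,s}$. The precise form depends on whether $r\le s$ or $r>s$. An alternating word of length $\ell$ contains about $\ell/2$ factors of each kind, so it annihilates any datum whose $z'$-degree or $w'$-degree is below roughly $\ell/2$. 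Since the data are polynomial, only finitely many $a_s,b_r$ are nonzero, with bounded degrees. Hence $A_{r,s}=B_{r,s}=0$ once $r+s$ is large, and $f$ is a polynomial.

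Finally, we will verify $f=A+\eps_mB$ against \eqref{eq:CK-system-one}--\eqref{eq:CK-system-two}. Signs arise from moving $\eps_m$ and $\iota_m$ past the primed operators, and we will confirm that these give exactly the minus sign in \eqref{eq:CK-recursion}.
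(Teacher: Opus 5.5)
Your proposal is correct and follows the paper's proof essentially step for step: you compare coefficients in the divided-power basis to get the recursion and uniqueness, derive the tangential constraints from the face hypotheses and $(\Dz')^2=(\Dw')^2=0$, and obtain finiteness because the alternating tangential words lower degree. One small fix is needed in the finiteness step: the word producing $A_{r,s}$ or $B_{r,s}$ has length $2\min(r,s)$ or $2\min(r,s)+1$, not about $r+s$ (e.g.\ $A_{0,s}=a_s$). What is true is that the word length plus the index of the boundary datum equals $r+s$, so for large $r+s$ either the datum vanishes or the word exceeds its bounded tangential degree; these are exactly the two facts you already invoke.
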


\begin{proof}
Expanding \eqref{eq:CK-system-one}--\eqref{eq:CK-system-two} in the divided-power basis gives precisely \eqref{eq:CK-recursion}.  The two face restrictions prescribe the coefficients in \eqref{eq:CK-boundary-coefficients}, so the recursion determines every remaining coefficient uniquely.

It remains to verify the tangential equations.  The boundary assumptions give
$\Dz'A_{0,s}=0$ and $\Dw'B_{r,0}=0$.  The first recurrence and
$(\Dz')^2=0$ imply $\Dz'A_{r,s}=0$ for $r>0$; the second recurrence and
$(\Dw')^2=0$ imply $\Dw'B_{r,s}=0$ for $s>0$.  Thus all four equations in \eqref{eq:CK-system-one}--\eqref{eq:CK-system-two} hold.

For finiteness, each diagonal step away from either boundary applies one of the operators
$-\Dz'\Dw'$ or $-\Dw'\Dz'$.  These lower the total tangential polynomial degree by two.  Since the boundary series are polynomials, every chain terminates and only finitely many coefficients survive.
\end{proof}

For the next statement, write $\mathscr P_{m-1;r,s}^n$ for the tangential
sector in the first $m-1$ coordinate pairs, and let $d_{m;r,s}^n$ denote the
dimension of $\Mcal_{q;r,s}^n$ in rank $m$.

\begin{corollary}[Boundary-data isomorphism and branching]\label{cor:CK-branching}
On each bidegree and spin sector, the boundary map in
\eqref{eq:two-face-boundary} is an isomorphism
\begin{align}
 \Mcal_{q;r,s}^n \longrightarrow{}
 &\bigoplus_{j=0}^{s}
 \bigl(\ker\Dz'\cap\mathscr P_{m-1;r,s-j}^n\bigr) \notag\\
 &\quad\oplus
 \bigoplus_{i=0}^{r}
 \bigl(\ker\Dw'\cap\mathscr P_{m-1;r-i,s}^{n-1}\bigr),
 \label{eq:CK-boundary-isomorphism}
\end{align}
where sectors with invalid spin degree are zero.  In particular,
\begin{align}
 d_{m;r,s}^n={}&
 \sum_{j=0}^{s}\dim\bigl(\ker\Dz'\cap\mathscr P_{m-1;r,s-j}^n\bigr)
 \notag\\*
 &+\sum_{i=0}^{r}\dim\bigl(\ker\Dw'\cap
 \mathscr P_{m-1;r-i,s}^{n-1}\bigr).
 \label{eq:CK-dimension-branching}
\end{align}
Thus the two faces contain neither redundant nor missing polynomial data.
\end{corollary}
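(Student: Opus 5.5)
The plan is to restrict \cref{thm:joint-CK} to a single homogeneous sector and check that the boundary map lands exactly in the displayed direct sum and is bijective there. Both $\Dz$ and $\Dw$ commute with $z_m$- and $w_m$-degree bookkeeping only in combination with the tangential degrees. So I first record how the sectors split. Write $f\in\Mcal_{q;r,s}^n$ as $f=A+\eps_mB$. Then $A$ has spin degree $n$ in $\bigwedge\C^{m-1}$, and $B$ has spin degree $n-1$. The expansion \eqref{eq:AB-expansion} of $A$ has coefficients $A_{i,j}\in\mathscr P_{m-1;r-i,s-j}^n$, and likewise $B_{i,j}\in\mathscr P_{m-1;r-i,s-j}^{n-1}$. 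The boundary data are the face coefficients
\[
A_{0,j}=a_j\in\ker\Dz'\cap\mathscr P_{m-1;r,s-j}^n,\qquad B_{i,0}=b_i\in\ker\Dw'\cap\mathscr P_{m-1;r-i,s}^{n-1},
\]
with $0\le j\le s$ and $0\le i\le r$. The kernel conditions on $a_j$ and $b_i$ are the tangential equations $\Dz'A=0$ and $\Dw'B=0$ of \eqref{eq:CK-system-one}--\eqref{eq:CK-system-two}, evaluated on the faces $z_m=0$ and $w_m=0$. This shows that the boundary map is well defined into the right-hand side of \eqref{eq:CK-boundary-isomorphism}. It is injective by the uniqueness part of \cref{thm:joint-CK}.

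For surjectivity, I take arbitrary data $(a_j)_j,(b_i)_i$ in the displayed spaces. I form the polynomials $a(w_m),b(z_m)$ and apply \cref{thm:joint-CK} to get a joint monogenic $f$. It then remains to check that $f$ is homogeneous of tridegree $(r,s,n)$. This is the one step needing care, and I would verify it by induction along the recursion \eqref{eq:CK-recursion}. The operator $\Dz'$ maps $\mathscr P_{m-1;r',s'}^{n'}$ to $\mathscr P_{m-1;r'-1,s'}^{n'+1}$. Hence $A_{i+1,j}=\Dz'B_{i,j}$ lies in the sector with tangential degree $(r-i-1,s-j)$ and spin $n$, as required. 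The case $B_{i,j+1}=-\Dw'A_{i,j}$ is symmetric. Combined with the factors $z_m^{\langle i\rangle_q}w_m^{\langle j\rangle_q}$ and $\eps_m$, every term of $f$ has total bidegree $(r,s)$ and spin degree $n$. Equivalently, uniqueness in \cref{thm:joint-CK} implies that the homogeneous components of the extension of homogeneous data are themselves extensions, so they must coincide with $f$.

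With bijectivity on each sector established, the dimension identity \eqref{eq:CK-dimension-branching} follows by taking dimensions of the finite-dimensional spaces on both sides. Sectors with invalid spin degree, for instance $n-1<0$ or $n>m-1$ in the tangential exterior algebra, are zero. They contribute nothing, consistent with the convention in the statement. I expect the only real obstacle to be the bookkeeping in the homogeneity step: one has to track the spin degree of $B$ (one less than that of $f$, because of the factor $\eps_m$) and confirm that it is preserved by the recursion. The rest is a direct consequence of the existence and uniqueness already proved in \cref{thm:joint-CK}.
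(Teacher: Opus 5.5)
Your proposal is correct and follows the paper's own argument. You place the face coefficients $a_j$, $b_i$ in the displayed tangential kernels and invoke existence and uniqueness from \cref{thm:joint-CK} to get a bijection on each sector, then take dimensions; your explicit check that the recursion preserves the tridegree $(r,s,n)$ (equivalently, that the extension of homogeneous data is homogeneous) fills in a step the paper leaves implicit in the phrase ``every such tuple has exactly one preimage.''
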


\begin{proof}
For a polynomial of bidegree $(r,s)$, the coefficient $a_j$ in
\eqref{eq:a-data} belongs to $\mathscr P_{m-1;r,s-j}^n$, and the coefficient
$b_i$ in \eqref{eq:b-data} belongs to
$\mathscr P_{m-1;r-i,s}^{n-1}$.  The compatibility conditions place them in
the displayed kernels.  The existence and uniqueness in
\cref{thm:joint-CK} show that every such tuple has exactly one preimage.
Taking dimensions gives \eqref{eq:CK-dimension-branching}.
\end{proof}

\begin{corollary}[Closed diagonal recursions]\label{cor:closed-CK}
For $r,s\ge1$,
\begin{equation}\label{eq:closed-diagonal-CK}
 A_{r,s}=-\Dz'\Dw'A_{r-1,s-1},
 \qquad
 B_{r,s}=-\Dw'\Dz'B_{r-1,s-1}.
\end{equation}
More explicitly,
\begin{align}
 A_{r,s}
 &=
 \begin{cases}
  (-\Dz'\Dw')^r a_{s-r},&r\le s,\\
  (-\Dz'\Dw')^s\Dz'b_{r-s-1},&r>s,
 \end{cases} \label{eq:closed-A}\\
 B_{r,s}
 &=
 \begin{cases}
  (-\Dw'\Dz')^s b_{r-s},&s\le r,\\
  (-\Dw'\Dz')^r(-\Dw'a_{s-r-1}),&s>r.
 \end{cases} \label{eq:closed-B}
\end{align}
Thus the extension can be evaluated by finite alternating tangential Dirac
iterations from the nearer boundary face.
\end{corollary}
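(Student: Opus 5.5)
The plan is to derive both closed formulas directly from the recursion \eqref{eq:CK-recursion} and the boundary prescriptions \eqref{eq:CK-boundary-coefficients} of \cref{thm:joint-CK}, using nothing else. First I prove \eqref{eq:closed-diagonal-CK}. For $r,s\ge1$, apply the first relation in \eqref{eq:CK-recursion} with index $(r-1,s)$ to get $A_{r,s}=\Dz'B_{r-1,s}$. Since $s\ge1$, the second relation with index $(r-1,s-1)$ gives $B_{r-1,s}=-\Dw'A_{r-1,s-1}$. Substituting yields $A_{r,s}=-\Dz'\Dw'A_{r-1,s-1}$.

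The formula for $B$ is obtained in the same way. Start from $B_{r,s}=-\Dw'A_{r,s-1}$. Then use $A_{r,s-1}=\Dz'B_{r-1,s-1}$, which is valid because $r\ge1$. This gives $B_{r,s}=-\Dw'\Dz'B_{r-1,s-1}$. No sign subtleties arise beyond the single minus sign carried by the second recurrence.

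Next I iterate the diagonal recursion $k=\min(r,s)$ times along the diagonal until one index reaches zero. For $A_{r,s}$:

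\emph{Case $r\le s$.} After $r$ steps we reach $A_{0,s-r}$. By \eqref{eq:CK-boundary-coefficients} this equals $a_{s-r}$, so $A_{r,s}=(-\Dz'\Dw')^r a_{s-r}$.

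\emph{Case $r>s$.} After $s$ steps we reach $A_{r-s,0}$ with $r-s\ge1$. This is not a boundary coefficient, so one more application of the first recurrence gives $A_{r-s,0}=\Dz'B_{r-s-1,0}=\Dz'b_{r-s-1}$. That produces the second line of \eqref{eq:closed-A}.

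For $B_{r,s}$ the roles are symmetric:

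\emph{Case $s\le r$.} We end at $B_{r-s,0}=b_{r-s}$.

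\emph{Case $s>r$.} We end at $B_{0,s-r}$ with $s-r\ge1$. The second recurrence gives $B_{0,s-r}=-\Dw'A_{0,s-r-1}=-\Dw'a_{s-r-1}$, which matches \eqref{eq:closed-B}.

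Formally, each iteration is an induction on $\min(r,s)$. The base cases $\min(r,s)=0$ are exactly the boundary identities, plus the single extra recurrence step in the off-boundary case.

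There is no genuine obstacle here. The only point needing care is the off-boundary endpoint in the cases $r>s$ for $A$ and $s>r$ for $B$. There the diagonal walk lands on the face where that function is not prescribed, so one extra step of the non-diagonal recurrence must be inserted. Checking the index bookkeeping in that step ($r-s-1\ge0$, respectively $s-r-1\ge0$) finishes the argument. The final sentence of the corollary, about evaluation from the nearer face, is then immediate, since $\min(r,s)$ measures the diagonal distance to the nearer boundary face.
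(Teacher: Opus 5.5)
Your proof is correct and follows the paper's argument. Substituting \eqref{eq:CK-recursion} into itself gives the diagonal relations. Diagonal descent to $\min(r,s)=0$ then gives the closed formulas, with one extra non-diagonal step where the descent lands on the unprescribed face ($A_{r-s,0}=\Dz'b_{r-s-1}$, respectively $B_{0,k}=-\Dw'a_{k-1}$); your index checks simply spell out what the paper states briefly.
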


\begin{proof}
Substitution of \eqref{eq:CK-recursion} into itself gives
\eqref{eq:closed-diagonal-CK}.  If $r\le s$, repeated diagonal descent
from $A_{r,s}$ reaches $A_{0,s-r}=a_{s-r}$; if $r>s$, it reaches
$A_{r-s,0}=\Dz'b_{r-s-1}$.  The two alternatives for $B_{r,s}$ follow
in the same way, using $B_{0,k}=-\Dw'a_{k-1}$ for $k>0$.
\end{proof}

\subsection{A rank-two example}\label{subsec:rank-two-CK}

The recursion is already nontrivial for $m=2$.  Take total spin degree one and prescribe
\[
 a(w_2)=w_2z_1w_1\eps_1,
 \qquad
 b(z_2)=0.
\]
The tangential conditions hold because
$\Dz'(z_1w_1\eps_1)=0$ and $\Dw'0=0$.  The only nonzero coefficients are
\[
 A_{0,1}=z_1w_1\eps_1,
 \qquad
 B_{0,2}=-z_1,
 \qquad
 A_{1,2}=-\eps_1.
\]
Thus
\begin{equation}\label{eq:rank-two-CK-example}
 f=
 z_1w_1w_2\eps_1
 -\frac{z_2w_2^2}{\qnum2}\eps_1
 -\frac{z_1w_2^2}{\qnum2}\eps_2.
\end{equation}
Since $D_{q,w_2}(w_2^2/\qnum2)=w_2$, direct substitution gives
$\Dz f=\Dw f=0$.  The factor $\qnum2^{-1}=(1+q)^{-1}$ is fixed by the Jackson recursion; at $q=1$ this becomes the classical divided-power extension.

\subsection{Extension to a \texorpdfstring{$q$}{q}-analytic Jackson--Fischer class}\label{subsec:analytic-ck}

The same recursion also defines infinite divided-power series.  We prove
convergence in the coefficient Hilbert space below.  This is the centered
version of the power-series notion of $q$-analyticity used in the one-operator
Jackson theory \cite{ZimmermannBernsteinSchneider2025}; at the centre $0$ the
$q$-shifted powers used there reduce to ordinary powers.

Let $\mathscr X_q^{(m-1)}$ be the completion of tangential spinor polynomials
in the first $m-1$ coordinate pairs for the norm
\begin{equation}\label{eq:tangential-fock-norm}
 \left\|\sum_{\alpha,\beta,S}c_{\alpha,\beta,S}
 z'^{\alpha}w'^{\beta}e_S\right\|_{\mathscr X_q}^2
 :=\sum_{\alpha,\beta,S}|c_{\alpha,\beta,S}|^2
 \qfac{\alpha}\qfac{\beta}.
\end{equation}
Each tangential Jackson derivative is a bounded weighted shift on this
space.  Consequently $\Dz'$ and $\Dw'$ are bounded, and we put
\begin{equation}\label{eq:CK-mu}
 \mu_{q,m}:=
 \max\{\|\Dz'\Dw'\|,\|\Dw'\Dz'\|\}.
\end{equation}
The elementary triangle estimate gives
\begin{equation}\label{eq:CK-mu-bound}
 \mu_{q,m}\le \frac{(m-1)^2}{1-q}.
\end{equation}
For a Banach space $X$ and $R>0$, write
\begin{equation}\label{eq:AqRX}
 \mathscr A_q(R;X):=
 \left\{u(x)=\sum_{j\ge0}x^{\langle j\rangle_q}u_j:
 \|u\|_{q,R}:=\sum_{j\ge0}\frac{R^j}{\qfac j}\|u_j\|_X<\infty\right\}.
\end{equation}
Finally, set
\begin{equation}\label{eq:Phi-q}
 \Phi_q(t):=\sum_{k=0}^{\infty}\frac{t^k}{(\qfac k)^2},
 \qquad 0\le t<(1-q)^{-2}.
\end{equation}

\begin{theorem}[$q$-analytic two-face CK extension]\label{thm:analytic-CK}
Let $m\ge2$, let $R_z,R_w>0$, and suppose
\begin{align*}
 a(w_m)&=\sum_{s\ge0}w_m^{\langle s\rangle_q}a_s
 \in\mathscr A_q(R_w;\mathscr X_q^{(m-1)}),
 &\Dz'a_s&=0,\\
 b(z_m)&=\sum_{r\ge0}z_m^{\langle r\rangle_q}b_r
 \in\mathscr A_q(R_z;\mathscr X_q^{(m-1)}),
 &\Dw'b_r&=0.
\end{align*}
Define $A_{r,s},B_{r,s}$ by
\eqref{eq:CK-boundary-coefficients}--\eqref{eq:CK-recursion}.  If
$0<\rho_z<R_z$ and $0<\rho_w<R_w$ satisfy
\begin{equation}\label{eq:analytic-CK-domain}
 \mu_{q,m}\rho_z\rho_w<(1-q)^{-2},
\end{equation}
then the two series in \eqref{eq:AB-expansion} converge normally in
$\mathscr X_q^{(m-1)}$ for
$|z_m|\le\rho_z$, $|w_m|\le\rho_w$.  Their sum
$f=A+\eps_mB$ is joint Jackson-monogenic and has the prescribed two-face
data.  It is the unique joint solution in this normally convergent divided-power
class.

Moreover, for $|z_m|\le\rho_z$ and $|w_m|\le\rho_w$,
\begin{align}
 \|A(z_m,w_m)\|_{\mathscr X_q}
 &\le \Phi_q(\mu_{q,m}\rho_z\rho_w)
 \left(\|a\|_{q,\rho_w}
       +\rho_z\|\Dz'\|\,\|b\|_{q,\rho_z}\right),
 \label{eq:analytic-CK-A-estimate}\\
 \|B(z_m,w_m)\|_{\mathscr X_q}
 &\le \Phi_q(\mu_{q,m}\rho_z\rho_w)
 \left(\|b\|_{q,\rho_z}
       +\rho_w\|\Dw'\|\,\|a\|_{q,\rho_w}\right).
 \label{eq:analytic-CK-B-estimate}
\end{align}
Thus the extension depends continuously on both boundary data on every
smaller bidisc satisfying \eqref{eq:analytic-CK-domain}.
\end{theorem}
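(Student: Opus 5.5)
The plan is to bound every coefficient $A_{r,s}$, $B_{r,s}$ using the closed formulas of \cref{cor:closed-CK}, and then to sum the resulting majorant series. Since $\Dz'$ and $\Dw'$ are bounded on $\mathscr X_q^{(m-1)}$ and $\mu_{q,m}$ bounds both $\|\Dz'\Dw'\|$ and $\|\Dw'\Dz'\|$, \eqref{eq:closed-A} gives
\[
 \|A_{r,s}\|\le\mu_{q,m}^r\|a_{s-r}\|\quad(r\le s),
 \qquad
 \|A_{r,s}\|\le\mu_{q,m}^s\|\Dz'\|\,\|b_{r-s-1}\|\quad(r>s),
\]
and \eqref{eq:closed-B} gives the symmetric bounds for $B_{r,s}$. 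For $|z_m|\le\rho_z$ and $|w_m|\le\rho_w$, the $(r,s)$ term of $A$ has norm at most $\rho_z^r\rho_w^s\|A_{r,s}\|/(\qfac r\qfac s)$. I would split the double sum into the regions $r\le s$ and $r>s$.

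\emph{Region $r\le s$.} Set $s=r+j$. The key combinatorial input is $\qfac{r+j}\ge\qfac r\,\qfac j$. This holds because the Gaussian binomial $\qfac{r+j}/(\qfac r\qfac j)$ is a polynomial in $q$ with nonnegative coefficients and constant term $1$, hence is $\ge1$ for $0<q<1$. With this inequality the term is bounded by
\[
 \frac{(\mu_{q,m}\rho_z\rho_w)^r}{(\qfac r)^2}\cdot\frac{\rho_w^j}{\qfac j}\|a_j\|.
\]
Summing over $r$ and $j$ gives $\Phi_q(\mu_{q,m}\rho_z\rho_w)\,\|a\|_{q,\rho_w}$.

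\emph{Region $r>s$.} Set $r=s+1+j$. Since $\qnum k\ge1$ for $k\ge1$, we have $\qfac{s+1+j}\ge\qfac{s+j}\ge\qfac s\,\qfac j$. This bounds the term by
\[
 \rho_z\|\Dz'\|\,\frac{(\mu_{q,m}\rho_z\rho_w)^s}{(\qfac s)^2}\cdot\frac{\rho_z^j}{\qfac j}\|b_j\|,
\]
which sums to the second part of \eqref{eq:analytic-CK-A-estimate}. The estimate \eqref{eq:analytic-CK-B-estimate} follows identically, using $\Dw'$ and $B_{0,k}=-\Dw'a_{k-1}$.

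Finiteness of $\Phi_q(t)$ for $t<(1-q)^{-2}$ follows from the ratio test, since $\qnum{k+1}^2\to(1-q)^{-2}$. Together with $\|a\|_{q,\rho_w}\le\|a\|_{q,R_w}<\infty$ for $\rho_w<R_w$ and the analogous bound for $b$, this gives normal convergence in $\mathscr X_q^{(m-1)}$ on the closed bidisc. Because the estimates are linear in $(a,b)$, they also give continuous dependence.

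It remains to verify the equations, and this step needs the most care. I must make sense of $D_{q,z_m}$ and $D_{q,w_m}$ on $\mathscr X_q$-valued normally convergent divided-power series. The dilations $T_{q,z_m}$ and $T_{q,w_m}$ map the bidisc into itself because $0<q<1$. I would show that the Jackson difference quotient of a normally convergent series can be computed termwise: for $z_m\ne0$ it is a finite combination of two convergent series, and at $z_m=0$ it is defined by continuity, which the normally convergent shifted series supplies. The shifted coefficient series is dominated by the same majorant, with $\rho$ replaced by any slightly larger admissible radius, or directly by the estimate above. Termwise, $D_{q,z_m}$ lowers the divided-power index with coefficient one. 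The bounded tangential operators $\Dz'$ and $\Dw'$ commute with normally convergent sums. Hence \eqref{eq:CK-system-one}--\eqref{eq:CK-system-two} reduce coefficientwise to the recursion \eqref{eq:CK-recursion} together with the tangential identities $\Dz'A_{r,s}=0$ and $\Dw'B_{r,s}=0$. These identities were already derived in the proof of \cref{thm:joint-CK} from $(\Dz')^2=(\Dw')^2=0$, and that argument is purely algebraic and carries over unchanged.

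Setting $z_m=0$ or $w_m=0$ recovers the data $a$ and $b$, which gives the prescribed two-face data. For uniqueness, the coefficients of a normally convergent $\mathscr X_q$-valued divided-power series are determined by the sum, for example by applying iterated Jackson derivatives and evaluating at $0$. Therefore any joint solution in this class must satisfy \eqref{eq:CK-boundary-coefficients}--\eqref{eq:CK-recursion}, and these relations determine all coefficients.
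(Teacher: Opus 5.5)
Your proposal is correct and follows the paper's proof essentially step by step. You use the same split of the closed formulas of \cref{cor:closed-CK} into $a$-face and $b$-face regions, and the same supermultiplicativity $\qfac{r+j}\ge\qfac r\,\qfac j$ to reach the majorant $\Phi_q(\mu_{q,m}\rho_z\rho_w)$. You also verify the recursion and uniqueness coefficientwise in the same way. The differences are only cosmetic: you justify supermultiplicativity through Gaussian binomials rather than the comparison $\qnum{r+j}\ge\qnum j$, you use the ratio test instead of $(\qfac k)^{1/k}\to(1-q)^{-1}$, and you note that the enlarged bidisc is not needed because $\qnum r\le(1-q)^{-1}$.
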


\begin{proof}
The one-coordinate Jackson derivative is a weighted backward shift with
weights $\sqrt{\qnum r}$ for the norm
\eqref{eq:tangential-fock-norm}; hence its norm is at most
$(1-q)^{-1/2}$.  Summing the $m-1$ tangential terms proves
\eqref{eq:CK-mu-bound}.

We use two elementary estimates.  First,
\begin{equation}\label{eq:qfactorial-supermultiplicative}
 \qfac{r+t}\ge \qfac r\,\qfac t,
 \qquad r,t\ge0,
\end{equation}
because $\qnum{r+j}\ge\qnum j$ for $j\ge1$.  Second,
$\lim_{k\to\infty}(\qfac k)^{1/k}=(1-q)^{-1}$, so the series
\eqref{eq:Phi-q} has radius $(1-q)^{-2}$.

Use now the closed formulas \eqref{eq:closed-A}--\eqref{eq:closed-B}.
For the part of $A$ coming from the $a$-face, write $s=r+t$.  Then
\eqref{eq:qfactorial-supermultiplicative} gives
\begin{align*}
 &\sum_{r,t\ge0}
 \frac{\rho_z^r\rho_w^{r+t}}
      {\qfac r\,\qfac{r+t}}
 \|(\Dz'\Dw')^r a_t\|_{\mathscr X_q}\\
 &\qquad\le
 \Phi_q(\mu_{q,m}\rho_z\rho_w)
 \sum_{t\ge0}\frac{\rho_w^t}{\qfac t}
 \|a_t\|_{\mathscr X_q}.
\end{align*}
For the part coming from the $b$-face, put $r=s+t+1$ and use
$\qfac{s+t+1}\ge\qfac{s+1}\qfac t$ to obtain
\begin{align*}
 &\sum_{s,t\ge0}
 \frac{\rho_z^{s+t+1}\rho_w^s}
      {\qfac{s+t+1}\,\qfac s}
 \|(\Dz'\Dw')^s\Dz'b_t\|_{\mathscr X_q}\\
 &\qquad\le
 \rho_z\|\Dz'\|\,
 \Phi_q(\mu_{q,m}\rho_z\rho_w)
 \sum_{t\ge0}\frac{\rho_z^t}{\qfac t}
 \|b_t\|_{\mathscr X_q}.
\end{align*}
This proves normal convergence of $A$ and
\eqref{eq:analytic-CK-A-estimate}.  The same argument with the roles of
$z$ and $w$ interchanged proves normal convergence of $B$ and
\eqref{eq:analytic-CK-B-estimate}.

Because the inequalities in the hypotheses are strict, choose
$\rho_z<\sigma_z<R_z$ and $\rho_w<\sigma_w<R_w$ such that
$\mu_{q,m}\sigma_z\sigma_w<(1-q)^{-2}$.  The preceding estimates give
normal convergence on the larger bidisc with radii $\sigma_z,\sigma_w$.
The normal-direction Jackson derivatives may therefore be applied term by term on the
smaller bidisc with radii $\rho_z,\rho_w$, while the tangential operators
are bounded.  The recurrence then yields
\eqref{eq:CK-system-one}--\eqref{eq:CK-system-two}.  The boundary
values are immediate.  Finally, any normally convergent divided-power
solution has uniquely determined coefficients; comparing them in the two
first-order equations reproduces \eqref{eq:CK-recursion}.  This proves
uniqueness and the stated continuous dependence.
\end{proof}

\begin{remark}[Size of the analytic neighborhood]\label{rem:analytic-CK-domain}
The condition \eqref{eq:analytic-CK-domain} is a sufficient convergence
condition obtained from operator norms, not a claim of maximality.  In
particular, using \eqref{eq:CK-mu-bound} it is enough that
\[
 \rho_z\rho_w<\frac{1}{(m-1)^2(1-q)}.
\]
For polynomial boundary data the recursion terminates, so no restriction of
this kind is needed and \cref{thm:joint-CK} is recovered.
\end{remark}

\begin{remark}[Two-face boundary data]\label{rem:two-faces}
A single value on the codimension-two intersection $z_m=w_m=0$ does not determine the joint first-order system.  The component $A$ is propagated in the $z_m$ direction from $z_m=0$, whereas $B$ is propagated in the $w_m$ direction from $w_m=0$.  Conditions \eqref{eq:a-data}--\eqref{eq:b-data} are exactly the tangential compatibility conditions.  Their geometry is inherited from the classical Hermitian system \cite{BrackxDeSchepperLavickaSoucek2011}; the divided powers turn its differential recursion into the present Jackson recursion.  The one-operator Jackson--CK problem in \cite{ZimmermannBernsteinSchneider2025} and the additive-lattice extension in \cite{DeRidderDeSchepperSommen2011} use different boundary schemes.
\end{remark}

\section{Divided-power companions and the joint Fischer decomposition}\label{sec:transport}

The coordinate variables $\Z$ and $\Zd$ are lattice-local and are the Hilbert adjoints of the Jackson derivatives.  A second coordinate pair is useful for the joint algebra.  Define the diagonal divided-power transform
\begin{equation}\label{eq:Gq}
 G_q(z^\alpha w^\beta e_S)
 :=\frac{\alpha!\,\beta!}{\qfac\alpha\,\qfac\beta}
 z^\alpha w^\beta e_S.
\end{equation}
It is an automorphism of each finite polynomial sector.  If
$\partial_{z_i},\partial_{w_i}$ are the ordinary derivatives, then
\begin{equation}\label{eq:derivative-intertwining}
 D_{q,z_i}=G_q\partial_{z_i}G_q^{-1},
 \qquad
 D_{q,w_i}=G_q\partial_{w_i}G_q^{-1}.
\end{equation}

\begin{corollary}[Intertwining of CK extension]\label{cor:CK-intertwining}
Let $\operatorname{CK}_q$ denote the polynomial boundary-to-solution map
of \cref{thm:joint-CK}.  Let $\operatorname{CK}_1$ denote its $q=1$
specialization in the same creation--contraction convention.  After the two
face polynomials are expanded coefficientwise, this is the classical
Hermitian polynomial extension of
\cite{BrackxDeSchepperLavickaSoucek2011}.  Let $G_q^\partial$ be the
restriction of $G_q$ to the two boundary faces.  Then
\begin{equation}\label{eq:CK-intertwining}
 G_q\operatorname{CK}_1
 =\operatorname{CK}_qG_q^\partial.
\end{equation}
Hence the Jackson theorem retains the two-face initial-data geometry of the
classical Hermitian system.
\end{corollary}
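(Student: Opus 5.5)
The plan is to prove \eqref{eq:CK-intertwining} by showing that both sides send the same boundary data to joint Jackson-monogenic polynomials with the same two-face values, and then to invoke the uniqueness part of \cref{thm:joint-CK}. Fix $q=1$ data $(a,b)$ satisfying \eqref{eq:a-data}--\eqref{eq:b-data} with $\Dz',\Dw'$ replaced by their classical versions $\partial'$-type operators $\sum_{i<m}\eps_i\partial_{z_i}$ and $\sum_{i<m}\iota_i\partial_{w_i}$. Let $f=\operatorname{CK}_1(a,b)$, and put $g:=G_qf$.

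The first step is to check that $G_q$ conjugates the classical Hermitian Dirac pair into the Jackson pair. Since $G_q$ is diagonal on monomials and acts trivially on the spinor factor, it commutes with $\eps_i$ and $\iota_i$. Then \eqref{eq:derivative-intertwining} gives $\Dz=G_q\bigl(\sum_i\eps_i\partial_{z_i}\bigr)G_q^{-1}$ and the analogous identity for $\Dw$. Hence $g$ is joint Jackson-monogenic. The same argument applied to the first $m-1$ coordinates shows that $G_q^\partial$ maps the classical tangential kernels to $\ker\Dz'$ and $\ker\Dw'$. So $G_q^\partial(a,b)$ is admissible data for $\operatorname{CK}_q$, and both sides of \eqref{eq:CK-intertwining} are defined.

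The second step is to compare the boundary values. $G_q$ factorizes as the product of the tangential transform $G_q'$ and the one-variable factors $r!/\qfac r$ in $z_m$ and $s!/\qfac s$ in $w_m$. Restricting to $z_m=0$ keeps exactly the $r=0$ terms, on which the $z_m$-factor equals $1$. Therefore $(G_qf)|_{z_m=0}=G_q(f|_{z_m=0})$, where on the right $G_q$ acts in the variables $w_m,z',w'$. The same holds for $w_m=0$. This is precisely the meaning of the face restriction $G_q^\partial$. Thus $g$ has face data $G_q^\partial(a,b)$, and uniqueness in \cref{thm:joint-CK} gives $g=\operatorname{CK}_q G_q^\partial(a,b)$.

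As an independent check, one can work at the level of coefficients. In the ordinary divided-power basis $z_m^r/r!\,w_m^s/s!$, the map $G_q$ turns each such monomial into the Jackson divided power $z_m^{\langle r\rangle_q}w_m^{\langle s\rangle_q}$ and applies $G_q'$ to the coefficient. The recursion \eqref{eq:CK-recursion} then transforms into itself because $G_q'\partial' G_q'^{-1}$ equals the corresponding Jackson tangential operator.

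The main obstacle is bookkeeping rather than substance. One has to fix precisely what $G_q^\partial$ means on a face, namely $G_q$ acting on all remaining variables with the normal variable absent, and one has to confirm that $\operatorname{CK}_1$ is indeed the $q=1$ recursion written in ordinary divided powers. Once that convention is fixed, the statement reduces to the conjugation identity together with uniqueness.
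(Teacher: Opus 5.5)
Your proposal is correct and is essentially the paper's own argument. The paper likewise uses that $G_q$ conjugates the classical Hermitian Dirac pair to $(\Dz,\Dw)$ via \eqref{eq:derivative-intertwining} and commutes with the spinor split and with restriction to the faces $z_m=0$, $w_m=0$; both sides of \eqref{eq:CK-intertwining} are then joint Jackson-monogenic with the same two-face data, and uniqueness in \cref{thm:joint-CK} gives the identity. Your explicit check that $G_q^\partial$ carries admissible classical data to admissible Jackson data, and your coefficientwise transport of the recursion \eqref{eq:CK-recursion}, fill in details that the paper leaves implicit.
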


\begin{proof}
The transform $G_q$ conjugates the two classical Dirac operators to the
Jackson pair by \eqref{eq:derivative-intertwining}, commutes with the spinor
split \eqref{eq:spin-split}, and restricts to $G_q^\partial$ on the two
faces.  Both sides of \eqref{eq:CK-intertwining} are therefore joint
Jackson-monogenic and have the same boundary data.  Uniqueness in
\cref{thm:joint-CK} proves the identity.
\end{proof}

\begin{remark}[Form of the CK operator]\label{rem:CK-formula-size}
For a fixed bidegree, the classical Hermitian CK extension in
\cite{BrackxDeSchepperLavickaSoucek2011} is written as an explicit finite
operator sum over the two families of admissible initial polynomials.  In
\cref{thm:joint-CK,cor:closed-CK} the same two families are collected into
the face polynomials $a(w_m)$ and $b(z_m)$.  Thus the shorter display does
not reduce the amount of Cauchy data.  The Jackson divided powers remove
the factorial coefficients from the recursion, leading to
\eqref{eq:closed-A}--\eqref{eq:closed-B}.  After expansion of the face data,
the specialization $q=1$ is the classical construction in the present
creation--contraction convention, while \eqref{eq:CK-intertwining} gives
its Jackson counterpart.  We shall use this recursive form.
\end{remark}

Let
\begin{equation}\label{eq:companion-coordinates}
 \widehat z_i:=G_qz_iG_q^{-1},
 \qquad
 \widehat w_i:=G_qw_iG_q^{-1}.
\end{equation}
Their monomial action is
\begin{align}
 \widehat z_i(z^\alpha w^\beta e_S)
 &=\frac{\alpha_i+1}{\qnum{\alpha_i+1}}
 z^{\alpha+e_i}w^\beta e_S, \label{eq:zhat-action}\\
 \widehat w_i(z^\alpha w^\beta e_S)
 &=\frac{\beta_i+1}{\qnum{\beta_i+1}}
 z^\alpha w^{\beta+e_i}e_S. \label{eq:what-action}
\end{align}
Set
\begin{equation}\label{eq:companion-Hermitian-variables}
 \hZ:=\sum_i\iota_i\widehat z_i,
 \qquad
 \hZd:=\sum_i\eps_i\widehat w_i,
 \qquad
 \hq:=\sum_i\widehat z_i\widehat w_i.
\end{equation}

\begin{theorem}[Transported Hermitian algebra]\label{thm:transported-algebra}
Let $E_z=\sum_i z_i\partial_{z_i}$ and
$E_w=\sum_i w_i\partial_{w_i}$.  Then
\begin{align}
 \hZ^2&=(\hZd)^2=0,
 &\{\hZ,\hZd\}&=\hq, \label{eq:transport-coordinate}\\
 \{\Dz,\hZd\}&=0,
 &\{\Dw,\hZ\}&=0, \label{eq:transport-cross}\\
 \{\Dz,\hZ\}&=E_z+\mathsf N,
 &\{\Dw,\hZd\}&=E_w+m-\mathsf N. \label{eq:transport-euler}
\end{align}
Moreover,
\begin{equation}\label{eq:transport-laplacian}
 \Delta_q=G_q\Delta G_q^{-1},
 \qquad
 \Delta:=\sum_i\partial_{z_i}\partial_{w_i}.
\end{equation}
\end{theorem}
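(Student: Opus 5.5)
The plan is to conjugate everything back to the classical Hermitian Clifford algebra by $G_q$. The key observation is that $G_q$ acts diagonally on monomials with a factor depending only on $(\alpha,\beta)$, and trivially on the spinor factor. Hence $G_q$ commutes with every $\eps_i$, $\iota_i$ and with $\mathsf N$.

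Consider the classical operators
\[
 \partial_z:=\sum_i\eps_i\partial_{z_i},\qquad
 \partial_w^\dagger:=\sum_i\iota_i\partial_{w_i},\qquad
 Z:=\sum_i\iota_iz_i,\qquad
 Z^\dagger:=\sum_i\eps_iw_i.
\]
By \eqref{eq:derivative-intertwining} and \eqref{eq:companion-coordinates} we get
\[
 \Dz=G_q\partial_zG_q^{-1},\qquad
 \Dw=G_q\partial_w^\dagger G_q^{-1},\qquad
 \hZ=G_qZG_q^{-1},\qquad
 \hZd=G_qZ^\dagger G_q^{-1}.
\]
Each identity in the theorem is then $G_q(\cdot)G_q^{-1}$ applied to the corresponding classical identity. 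This uses only that conjugation preserves products and anticommutators.

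The classical identities are the $q=1$ case of \cref{thm:native-algebra}. At $q=1$ the Jackson derivatives become ordinary derivatives, since $D_{1,x}$ is read by polynomial continuation as $\partial_x$ and the relations \eqref{eq:jackson-relations} become $\partial_xx=1+x\partial_x$. The proof of \cref{thm:native-algebra} used only \eqref{eq:car} and these one-variable relations, so it applies verbatim. It gives $Z^2=(Z^\dagger)^2=0$, $\{Z,Z^\dagger\}=\sum_iz_iw_i$, $\{\partial_z,Z^\dagger\}=\{\partial_w^\dagger,Z\}=0$ and $\{\partial_w^\dagger,Z^\dagger\}$-type cross identities. For the Euler identities we get
\[
 \{\partial_z,Z\}=\sum_i(N_i+E_{z_i})=E_z+\mathsf N,\qquad
 \{\partial_w^\dagger,Z^\dagger\}=\sum_i(1-N_i+E_{w_i})=E_w+m-\mathsf N,
\]
because $q^{N_i}=1$ at $q=1$. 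Similarly $\{\partial_z,\partial_w^\dagger\}=\Delta$.

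Conjugating by $G_q$ then yields \eqref{eq:transport-coordinate}, \eqref{eq:transport-cross} and \eqref{eq:transport-laplacian}. The last one uses $\Delta_q=\{\Dz,\Dw\}$ from \eqref{eq:laplace-factor}; alternatively it follows directly from $G_q\partial_{z_i}\partial_{w_i}G_q^{-1}=D_{q,z_i}D_{q,w_i}$. For the products, $\hZ\hZd$ conjugates $ZZ^\dagger$, and $\hq=\sum_i\widehat z_i\widehat w_i=G_q\rho G_q^{-1}$, which gives $\{\hZ,\hZd\}=\hq$.

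The one step needing care is \eqref{eq:transport-euler}. Conjugation gives $G_q(E_z+\mathsf N)G_q^{-1}$, and this equals $E_z+\mathsf N$ itself: both $E_z$ and $\mathsf N$ are diagonal in the monomial spinor basis, as is $G_q$, so they commute. The same holds for $E_w+m-\mathsf N$.

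A second point to check is \eqref{eq:derivative-intertwining} itself. On $z^\alpha$ in the $i$th variable, $G_q\partial_{z_i}G_q^{-1}$ sends
\[
 z_i^{a}\;\mapsto\;\frac{[a]_q!}{a!}\,a\,\frac{(a-1)!}{[a-1]_q!}\,z_i^{a-1}=\qnum{a}z_i^{a-1},
\]
which matches $D_{q,z_i}$. As a sanity check, \eqref{eq:zhat-action} also agrees with $G_qz_iG_q^{-1}$ computed the same way.
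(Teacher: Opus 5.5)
Your proposal is correct and follows the paper's own argument: conjugate the classical Hermitian relations by $G_q$, use that $G_q$ commutes with the exterior operators and with the diagonal operators $E_z$, $E_w$ and $\mathsf N$, and read off \eqref{eq:transport-laplacian} from \eqref{eq:derivative-intertwining}. Your explicit checks of the intertwining and of the $q=1$ specialization of the local algebra fill in details the paper's short proof leaves implicit.
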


\begin{proof}
All formulas are the conjugates by $G_q$ of the standard Hermitian Clifford relations.  The Euler operators commute with $G_q$ because all three are diagonal on monomials.  Equation \eqref{eq:transport-laplacian} also follows directly from \eqref{eq:derivative-intertwining}.
\end{proof}

The local and companion coordinate systems have different roles.

\begin{proposition}[Local and companion coordinates]\label{prop:local-spectral-tradeoff}
For $q\ne1$, $\widehat z_i$ differs from multiplication by $z_i$ on every monomial with positive $z_i$-degree, and the analogous statement holds for $\widehat w_i$.  The local pair $(\Z,\Zd)$ consists of coordinate multiplications and satisfies
$\Dz^*=\Z$, $\Dw^*=\Zd$ for the Jackson--Fischer inner product.  The companion pair $(\hZ,\hZd)$ has the scalar Euler relations \eqref{eq:transport-euler}, but its coefficients depend on the individual coordinate degrees through \eqref{eq:zhat-action}--\eqref{eq:what-action}.  In particular, it is diagonal in the degree variables before raising the corresponding coordinate, rather than ordinary multiplication on \eqref{eq:multiplicative-lattice}.
\end{proposition}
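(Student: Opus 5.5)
The plan is to check the four assertions of \cref{prop:local-spectral-tradeoff} one at a time, working on monomials $z^\alpha w^\beta e_S$ throughout. All the operators involved are either diagonal on this basis or weighted shifts of it.

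For the first claim we compare \eqref{eq:zhat-action} with ordinary multiplication by $z_i$. The two agree on $z^\alpha w^\beta e_S$ exactly when $(\alpha_i+1)=\qnum{\alpha_i+1}$. We have $\qnum{1}=1$. For $k\ge2$ and $0<q<1$,
\[
 \qnum{k}=1+q+\cdots+q^{k-1}<k .
\]
So the coefficient $(\alpha_i+1)/\qnum{\alpha_i+1}$ is strictly larger than $1$ whenever $\alpha_i\ge1$, and $\widehat z_i\neq z_i$ on every monomial with positive $z_i$-degree. Since $\alpha_i\ge1$ gives $\alpha_i+1\ge2$, the case $\qnum{1}=1$ never arises here. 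The statement for $\widehat w_i$ is the same computation with $\beta_i$ in place of $\alpha_i$, using \eqref{eq:what-action}.

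For the adjoint claim we take the Jackson--Fischer inner product to make the monomials $z^\alpha w^\beta e_S$ orthogonal with squared norm $\qfac\alpha\,\qfac\beta$, as in \eqref{eq:tangential-fock-norm} extended to all $m$ coordinates. This is the norm of \cref{sec:fock}, whose description we assume here. On monomials, $D_{q,z_i}$ sends $z^{\alpha+e_i}$ to $\qnum{\alpha_i+1}z^\alpha$, and multiplication by $z_i$ sends $z^\alpha$ to $z^{\alpha+e_i}$. Pairing these two maps gives
\[
 \langle D_{q,z_i}z^{\alpha+e_i},z^\alpha\rangle
 =\qnum{\alpha_i+1}\qfac{\alpha}
 =\qfac{\alpha+e_i}
 =\langle z^{\alpha+e_i},z_iz^\alpha\rangle .
\]
Hence $D_{q,z_i}^*=z_i$, and likewise $D_{q,w_i}^*=w_i$. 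On the spinor factor we have $\eps_i^*=\iota_i$, and the bosonic and fermionic factors act on different tensor components. Summing term by term gives
\[
 \Dz^*=\sum_i\iota_iz_i=\Z,
 \qquad
 \Dw^*=\sum_i\eps_iw_i=\Zd .
\]

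The scalar Euler relations for the companion pair are exactly \eqref{eq:transport-euler} of \cref{thm:transported-algebra}, so we only cite them. The dependence of the coefficients on individual degrees is read off from \eqref{eq:zhat-action}. For instance, with $m\ge2$, the operator $\widehat z_1$ multiplies $z_1$ by $2/\qnum2\neq1$ but multiplies $z_2$ by $1$. Both inputs have total degree one, yet the coefficients differ.

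For the last sentence we write $\widehat z_i=z_i\,c_i$. Here $c_i$ is the diagonal operator with eigenvalue $(\alpha_i+1)/\qnum{\alpha_i+1}$, so it is diagonal in the degree variables and acts before the coordinate is raised. By the first claim $c_i$ is not the identity, so $\widehat z_i$ is not multiplication by a function. It therefore cannot be a pointwise multiplication operator on the lattice \eqref{eq:multiplicative-lattice}.

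The only step needing care is the adjoint computation, and specifically fixing the normalization of the Jackson--Fischer inner product. That normalization is only defined later in the paper, so we state the assumed weights $\qfac\alpha\,\qfac\beta$ explicitly. This is the weighting consistent with \eqref{eq:tangential-fock-norm}, and with it everything else is a one-line check on monomials.
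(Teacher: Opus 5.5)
Your proposal is correct and follows the paper's proof: the paper also notes that $(r+1)/\qnum{r+1}\neq1$ for $r\ge1$, and it takes the remaining assertions from \cref{prop:fischer-adjoints}, whose one-coordinate weighted-shift computation you reproduce inline, and from \cref{thm:transported-algebra}. In your last step, the passage from $c_i\neq I$ to ``not a multiplication operator'' is cleanest via $\widehat z_i1=z_i$, since this forces any candidate multiplier to be $z_i$ itself.
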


\begin{proof}
For $r\ge1$, the factor $(r+1)/\qnum{r+1}$ equals one only when $q=1$.  The remaining assertions follow from \cref{prop:fischer-adjoints,thm:transported-algebra}.
\end{proof}

Put
\begin{equation}\label{eq:harmonic-space}
 \Hcal_{q;r,s}^n:=\ker\Delta_q\cap\mathscr P_{r,s}^n.
\end{equation}
For $a,b\ge0$ and $0\le n\le m$, define
\begin{equation}\label{eq:double-lift}
 \widehat X_{a,b,n}:=
 (a+n)\hZ\hZd-(b+m-n)\hZd\hZ.
\end{equation}

\begin{theorem}[Joint Jackson--Hermitian Fischer decomposition]\label{thm:joint-fischer}
For every $r,s\ge0$ and $0\le n\le m$,
\begin{align}
 \Hcal_{q;r,s}^n
 ={}&\Mcal_{q;r,s}^n
 \oplus\hZ\Mcal_{q;r-1,s}^{n+1}
 \oplus\hZd\Mcal_{q;r,s-1}^{n-1} \notag\\
 &\oplus\mathbf1_{1\le n\le m-1}
 \widehat X_{r-1,s-1,n}\Mcal_{q;r-1,s-1}^n, \label{eq:harmonic-joint-fischer}
\end{align}
where a summand with a negative polynomial degree or an invalid spin degree is zero.  Iterating the scalar harmonic decomposition gives
\begin{equation}\label{eq:full-joint-fischer}
 \mathscr P_{r,s}^n
 =\bigoplus_{j=0}^{\min(r,s)}
 \hq^j\,\mathscr R_{q;r-j,s-j}^n,
\end{equation}
where $\mathscr R_{q;r,s}^n$ denotes the four-term right-hand side of
\eqref{eq:harmonic-joint-fischer}.
\end{theorem}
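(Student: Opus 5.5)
The plan is to transport the classical Hermitian Fischer decomposition through the divided-power transform $G_q$ of \eqref{eq:Gq}, and not to work directly with the lattice-local operators. First I will record what $G_q$ does to the relevant spaces. It is diagonal on monomials and acts as the identity on the spinor factor, so it preserves every sector $\mathscr P_{r,s}^n$. By \eqref{eq:derivative-intertwining}, and because $\eps_i,\iota_i$ commute with $G_q$, we get
\[
 \Dz=G_q\partial_zG_q^{-1},\qquad \Dw=G_q\partial_z^\dagger G_q^{-1},
\]
where $\partial_z:=\sum_i\eps_i\partial_{z_i}$ and $\partial_z^\dagger:=\sum_i\iota_i\partial_{w_i}$ are the classical Hermitian Dirac operators in this creation--contraction convention. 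Together with \eqref{eq:transport-laplacian} this gives
\[
 \Mcal_{q;r,s}^n=G_q\Mcal_{1;r,s}^n,\qquad \Hcal_{q;r,s}^n=G_q\Hcal_{1;r,s}^n .
\]
By \eqref{eq:companion-coordinates}, $\hZ=G_q\Z_1G_q^{-1}$ with $\Z_1=\sum_i\iota_iz_i$, and similarly $\hZd=G_q\Zd_1 G_q^{-1}$ and $\hq=G_q\rho G_q^{-1}$. Consequently $\widehat X_{a,b,n}=G_qX_{a,b,n}G_q^{-1}$, where $X_{a,b,n}=(a+n)\Z_1\Zd_1-(b+m-n)\Zd_1\Z_1$.

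Second, I will apply $G_q$ to the classical statements. Since $G_q$ is a linear automorphism of each finite sector, it carries direct sums to direct sums. The claim \eqref{eq:harmonic-joint-fischer} is therefore the $G_q$-image of the classical identity
\[
 \Hcal_{1;r,s}^n=\Mcal^n_{1;r,s}\oplus\Z_1\Mcal^{n+1}_{1;r-1,s}\oplus\Zd_1\Mcal^{n-1}_{1;r,s-1}\oplus\mathbf 1_{1\le n\le m-1}X_{r-1,s-1,n}\Mcal^n_{1;r-1,s-1}.
\]
In the same way, \eqref{eq:full-joint-fischer} is the image of the classical harmonic Fischer decomposition $\mathscr P_{r,s}^n=\bigoplus_j\rho^j\Hcal^n_{1;r-j,s-j}$, combined with the previous identity. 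For both classical identities I will invoke \cite{BrackxDeSchepperSoucek2010} and \cite{SabadiniSommen2015}.

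The main obstacle is matching conventions. The cited sources write the Hermitian theory with Witt-basis Clifford elements, not with creation and contraction on $\bigwedge\C^m$. I will translate by identifying spinor degree $n$ with the homogeneous part of the spinor space. I then have to check that the coefficients $(a+n)$ and $(b+m-n)$ in \eqref{eq:double-lift} are the ones that make $X_{a,b,n}M$ harmonic for $M\in\Mcal_{1;a,b}^n$. I would verify this directly: apply $\Delta=\{\partial_z,\partial_z^\dagger\}$ and use \eqref{eq:transport-cross}--\eqref{eq:transport-euler} at $q=1$, which give $\{\partial_z,\Z_1\}=E_z+\mathsf N$ and $\{\partial_z^\dagger,\Zd_1\}=E_w+m-\mathsf N$. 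I expect this to yield $\Delta X_{a,b,n}M=0$ after the two Euler eigenvalues $a+n$ and $b+m-n$ cancel.

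I will also check the indicator. When $n=0$, $\Zd_1\Z_1$ kills the sector and $\Z_1\Zd_1 M$ already lies in another summand, so the fourth term is redundant; the case $n=m$ is symmetric. To confirm directness and exhaustion independently of the citation, I would compare dimensions using the classical multiplicity formulas. A backup route is to apply the joint projectors built from \cref{thm:native-projectors} in transported form, using the scalar Euler relations \eqref{eq:transport-euler}.
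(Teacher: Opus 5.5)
Your proposal is correct and follows the paper's proof. Both conjugate the classical Hermitian harmonic Fischer decomposition and the scalar decomposition $\Pcal_{r,s}=\ker\Delta\oplus\rho\,\Pcal_{r-1,s-1}$ by $G_q$, which carries the classical Dirac pair, $\Z_1,\Zd_1,\rho$ and the harmonic and joint kernels to their hatted Jackson counterparts, and both check the coefficients of $\widehat X_{a,b,n}$ via $\Delta(\Z_1\Zd_1u)=(b+m-n)u$ and $\Delta(\Zd_1\Z_1u)=(a+n)u$. One correction to your side remark on the indicator: at $n=0$ the vector $\Z_1\Zd_1M=\rho M$ is not harmonic for $M\neq0$, so it lies in no other summand; the fourth term is in fact zero because $\Mcal^0_{1;a,b}=0$ for $a>0$, which forces the coefficient $a+n$ to vanish whenever $M\ne0$ (dually at $n=m$, where $\Mcal^m_{1;a,b}=0$ for $b>0$).
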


\begin{proof}
At $q=1$, formula \eqref{eq:harmonic-joint-fischer} is the classical Hermitian harmonic Fischer decomposition; see \cite{BrackxDeSchepperSoucek2010}.  Its double-coordinate coefficient can also be checked directly.  If
$u\in\Mcal_{1;a,b}^n$, put $\alpha=a+n$ and $\beta=b+m-n$.  The classical mixed algebra gives
\[
 \Delta(\Z\Zd u)=\beta u,
 \qquad
 \Delta(\Zd\Z u)=\alpha u,
\]
so $\alpha\Z\Zd u-\beta\Zd\Z u$ is harmonic.  The standard directness proof uses the scalar Euler homotopies.

Conjugate the complete classical direct sum by $G_q$.  Equations
\eqref{eq:derivative-intertwining} and \eqref{eq:companion-Hermitian-variables} carry the classical Dirac pair, coordinate pair, radius, harmonic kernel, and joint kernel to their hatted Jackson counterparts.  This proves \eqref{eq:harmonic-joint-fischer}.  Conjugating and iterating the classical scalar Fischer decomposition
$\Pcal_{r,s}=\ker\Delta\oplus\rho\Pcal_{r-1,s-1}$ gives \eqref{eq:full-joint-fischer}.
\end{proof}

\begin{corollary}[Joint monogenic multiplicities]\label{cor:joint-dimensions}
Let $d_{m;r,s}^n=\dim\Mcal_{q;r,s}^n$.  These dimensions are independent of
$q\in(0,1)$.  For $1\le n\le m-1$,
\begin{equation}\label{eq:dimension-formula}
 d_{m;r,s}^n=
 \frac{(r+m-1)!(s+m-1)!(r+s+m)}
 {r!s!(m-1)!(n-1)!(m-n-1)!(r+n)(s+m-n)}.
\end{equation}
At the spin endpoints,
\begin{equation}\label{eq:endpoint-dimensions}
 d_{m;r,s}^0=\delta_{r0}\binom{s+m-1}{m-1},
 \qquad
 d_{m;r,s}^m=\delta_{s0}\binom{r+m-1}{m-1}.
\end{equation}
\end{corollary}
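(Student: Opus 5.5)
The plan has three steps: transport the problem to $q=1$ with $G_q$, get the endpoint dimensions by hand, and then compute the middle dimensions by counting dimensions in the classical Fischer decomposition.

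\emph{Step 1: independence of $q$.} The transform $G_q$ of \eqref{eq:Gq} is diagonal on monomial spinors with positive coefficients. It therefore preserves every sector $\mathscr P_{r,s}^n$ and acts on each as an automorphism. By \eqref{eq:derivative-intertwining} it conjugates the classical pair $\partial_z:=\sum_i\eps_i\partial_{z_i}$, $\partial_w^\dagger:=\sum_i\iota_i\partial_{w_i}$ to $\Dz,\Dw$; it commutes with $\eps_i,\iota_i$ because it acts only on the polynomial factor. Hence $G_q$ maps the classical joint kernel $\Mcal_{1;r,s}^n$ bijectively onto $\Mcal_{q;r,s}^n$. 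So $d_{m;r,s}^n$ equals its $q=1$ value and is independent of $q$. It then suffices to compute the classical dimensions.

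\emph{Step 2: endpoints.} Write $f=\sum_S f_Se_S$. For $n=0$ the condition $\Dw f=0$ is automatic, since $\iota_i$ kills $e_\varnothing$. The condition $\Dz f=0$ reads $D_{q,z_i}f_\varnothing=0$ for all $i$. This means $f_\varnothing\in\C[w]$, which forces $r=0$ and leaves $\binom{s+m-1}{m-1}$ monomials. The case $n=m$ is symmetric: $\Dz$ is automatic and $\Dw f=0$ forces $f\in\C[z]$. This is consistent with the one-dimensional analysis of \cref{rem:rank-one}.

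\emph{Step 3: the middle range $1\le n\le m-1$.} I would take dimensions in \eqref{eq:harmonic-joint-fischer} and \eqref{eq:full-joint-fischer}, using the directness already proved. First, \eqref{eq:full-joint-fischer} gives $\dim\Hcal^n_{r,s}=\binom mn h_{r,s}$, where
\[
 h_{r,s}=\dim\Pcal_{r,s}-\dim\Pcal_{r-1,s-1}=\binom{r+m-1}{m-1}\binom{s+m-1}{m-1}-\binom{r+m-2}{m-1}\binom{s+m-2}{m-1}.
\]
For \eqref{eq:harmonic-joint-fischer} to give a dimension count, the three lifting operators $\hZ$, $\hZd$, $\widehat X$ must be injective on the relevant $\Mcal$-spaces. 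I would check this with \eqref{eq:transport-euler}. For example, $\Dz\hZ u=(E_z+\mathsf N)u=(r-1+n+1)u$ for $u\in\Mcal^{n+1}_{r-1,s}$, which is nonzero except at the degenerate zero sector, so $\hZ$ is injective there. The harmonicity identity in the proof of \cref{thm:joint-fischer} handles $\widehat X$ the same way. This yields the recursion
\[
 \binom mn h_{r,s}=d^n_{r,s}+d^{n+1}_{r-1,s}+d^{n-1}_{r,s-1}+d^n_{r-1,s-1},
\]
which determines $d^n_{r,s}$ by induction on $r+s$ with the endpoint values as input.

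The main obstacle is the final verification that \eqref{eq:dimension-formula} satisfies this recursion: a rational-function identity with boundary cases where $n\pm1\in\{0,m\}$. I would try it first by writing $d^n_{r,s}$ as a difference of products of binomials, in the spirit of a Weyl dimension formula for $U(m)$ with highest weight $(r,0,\dots,0,-s)$ twisted by $\bigwedge^n$, so that the sum telescopes. If that fails, the fallback is to cite the classical multiplicity formula of \cite{BrackxDeSchepperSoucek2010} for $\Mcal_{1;r,s}^n$ directly and finish with Step 1. A secondary fallback is the branching formula \cref{cor:CK-branching}, which gives a recursion in $m$.
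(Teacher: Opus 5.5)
Your Steps 1 and 2 match the paper's argument. $G_q$ carries the classical joint kernel isomorphically onto $\Mcal_{q;r,s}^n$. At the spin endpoints one equation is vacuous and the other forces independence of $z$, respectively $w$. For $1\le n\le m-1$ the paper does exactly what you list as your fallback: it identifies \eqref{eq:dimension-formula} as the Weyl dimension of the classical Hermitian monogenic module with highest weight $(r+1,1^{n-1},0^{m-n-1},-s)$ and transports it by $G_q$. With that fallback your proof is complete and coincides with the paper's.

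Your primary middle-range route is a genuinely different, more intrinsic alternative, and it can be completed, but two ingredients are missing.

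First, the harmonicity identity from the proof of \cref{thm:joint-fischer} does not give injectivity of $\widehat X_{a,b,n}$. It only says $\Delta_q\widehat X_{a,b,n}u=0$, which also holds if $\widehat X_{a,b,n}u=0$. Instead, apply $\Dw$ and then $\Dz$, using \eqref{eq:transport-cross}--\eqref{eq:transport-euler}. For $u\in\Mcal_{q;a,b}^n$, with $\alpha=a+n$ and $\beta=b+m-n$, one gets $\Dw\widehat X_{a,b,n}u=-\beta(\alpha+\beta+1)\hZ u$ and $\Dz\hZ u=\alpha u$. Since $\alpha,\beta\ge1$ when $1\le n\le m-1$, this gives injectivity.

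Second, checking that \eqref{eq:dimension-formula} satisfies your recursion is the whole content of this route, and you leave it open. The identity does hold. For $m=2$ the recursion reads $2(r+s+1)=d^1_{r,s}+r\delta_{s0}+s\delta_{r0}+(r+s)\mathbf1_{r,s\ge1}$, which gives $d^1_{r,s}=r+s+2$. In general, after dividing by $(r+m-1)!(s+m-1)!/(r!\,s!)$, it becomes a rational identity in $r,s$ that can be checked by clearing denominators. The cases $n=1,\ r=0$ and $n=m-1,\ s=0$ need separate treatment: there you must use the endpoint values \eqref{eq:endpoint-dimensions} directly, because the closed formula extended formally to spin $0$ or $m$ degenerates to $0/0$.

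As for the trade-off, your route needs only \cref{thm:joint-fischer} plus elementary algebra and avoids identifying highest weights. The classical input is moved rather than removed, though, since \cref{thm:joint-fischer} is itself a transport of the classical Fischer decomposition. The paper's route is a one-line citation that leaves all the combinatorics to classical representation theory.
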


\begin{proof}
The transform $G_q$ is an isomorphism from each classical joint kernel onto the corresponding Jackson joint kernel, so the dimensions agree with the classical ones.  Formula \eqref{eq:dimension-formula} is the Weyl dimension of the classical Hermitian monogenic module (see also \cite{BrackxDeSchepperSoucek2010}) with highest weight
$(r+1,1^{n-1},0^{m-n-1},-s)$.  At $n=0$, the equation $\Dz f=0$ forces independence of $z$, while $\Dw$ vanishes on bottom spin; the top-spin statement is dual.
\end{proof}

\begin{remark}[Local and transported structures]\label{rem:transport-scope}
The joint nullspace and its decomposition concern the explicit Jackson--Dirac pair.  The direct-sum coordinates in \eqref{eq:harmonic-joint-fischer}, however, are the degree-spectral companions.  The orthogonal one-polarization splittings \eqref{eq:z-fischer}--\eqref{eq:w-fischer} use the local coordinate variables.  For $m\ge2$, \cref{prop:non-scalar-euler} shows that the scalar Euler relations of the companion pair do not hold for the local coordinate multiplications.
\end{remark}

\section{The Jackson--Fischer space and reproducing kernels}\label{sec:fock}

Define an inner product on $\mathscr P_m$ by declaring the monomial spinors orthogonal and setting
\begin{equation}\label{eq:fischer-inner-product}
 \left\langle z^\alpha w^\beta e_S,
 z^{\alpha'}w^{\beta'}e_T\right\rangle_q
 :=\delta_{\alpha\alpha'}\delta_{\beta\beta'}\delta_{ST}
 \qfac\alpha\,\qfac\beta.
\end{equation}
With $m$ replaced by $m-1$, this is exactly the tangential coefficient norm
\eqref{eq:tangential-fock-norm} used in the analytic CK theorem.  Thus the
extension in \cref{thm:analytic-CK} and the reproducing-kernel completion below
belong to the same Hilbert scale.

\begin{proposition}[Jackson--Fischer adjoints]\label{prop:fischer-adjoints}
On polynomials,
\begin{equation}\label{eq:fischer-adjoints}
 D_{q,z_i}^*=z_i,
 \qquad
 D_{q,w_i}^*=w_i,
 \qquad
 \Dz^*=\Z,
 \qquad
 \Dw^*=\Zd.
\end{equation}
Consequently, $A_q$ and $B_q$ are positive self-adjoint operators, and the projectors in \cref{thm:native-projectors} are orthogonal.
\end{proposition}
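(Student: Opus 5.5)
The plan is to verify everything on the orthogonal monomial basis, where the inner product \eqref{eq:fischer-inner-product} is diagonal and each operator in question is a weighted shift. We start with a single coordinate. For $x=z_i$ we compare $\langle D_{q,z_i}(z^\alpha w^\beta e_S), z^{\alpha'}w^{\beta'}e_T\rangle_q$ with $\langle z^\alpha w^\beta e_S, z_i z^{\alpha'}w^{\beta'}e_T\rangle_q$. By \eqref{eq:jackson-relations}, $D_{q,z_i}z^\alpha=\qnum{\alpha_i}z^{\alpha-e_i}$. Both pairings vanish unless $\alpha=\alpha'+e_i$, $\beta=\beta'$ and $S=T$. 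In that case the left side is $\qnum{\alpha_i}\qfac{\alpha-e_i}\qfac\beta$ and the right side is $\qfac{\alpha}\qfac\beta$. These agree because $\qfac{\alpha_i}=\qnum{\alpha_i}\qfac{\alpha_i-1}$. The case $\alpha_i=0$ is trivial, since both sides are zero. The $w_i$ case is identical.

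Next we pass to the Dirac operators. On $\Fcal_m$ with the orthonormal basis $e_S$ we have $\iota_i=\eps_i^*$, and the inner product is a tensor product of the polynomial part and the spinor part. Since $\eps_i$ acts only on the spinor factor and $D_{q,z_i}$ only on the polynomial factor, these two commute, and so
\[
 (\eps_iD_{q,z_i})^*=D_{q,z_i}^*\eps_i^*=z_i\iota_i=\iota_iz_i .
\]
Summing over $i$ gives $\Dz^*=\Z$. In the same way $(\iota_iD_{q,w_i})^*=\eps_iw_i$, which gives $\Dw^*=\Zd$. All of these operators preserve the finite-dimensional sectors up to a degree shift, so these adjoint identities on polynomials are legitimate.

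For the consequences, $A_q=\Dz\Z+\Z\Dz=\Dz\Dz^*+\Dz^*\Dz$ is a sum of two operators of the form $TT^*$. Hence $\langle A_qf,f\rangle_q=\|\Dz^*f\|^2+\|\Dz f\|^2\ge0$, and $A_q$ is self-adjoint. Alternatively this can be read off from \cref{prop:euler-spectra}: $A_q$ is diagonal in the orthogonal basis with nonnegative real eigenvalues. The same argument applies to $B_q$. The generalized inverse $A_q^\#$ is diagonal in this basis with real entries, so it is self-adjoint as well. Then
\[
 Q_z^*=(\Z A_q^\#\Dz)^*=\Dz^*A_q^\#\Z^*=\Z A_q^\#\Dz=Q_z,
\]
so $\Pi_z=I-Q_z$ is self-adjoint too. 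By \cref{thm:native-projectors} these operators are idempotents, and a self-adjoint idempotent is an orthogonal projector. The $w$ case is the same.

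None of these steps is a real obstacle. The only point needing care is the bookkeeping in the first step: the factorial weights must match exactly, including the boundary case $\alpha_i=0$, and the order of the spinor and bosonic factors in the adjoint must be correct. The latter is harmless because the two factors commute.
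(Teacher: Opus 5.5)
Your proof is correct and follows the paper's argument. The paper uses the same one-variable identity $\langle D_{q,x}x^r,x^{r-1}\rangle_q=\qfac r=\langle x^r,x\,x^{r-1}\rangle_q$, tensored over the coordinates with $\eps_i^*=\iota_i$, and gets positivity from $A_q=\Dz\Dz^*+\Dz^*\Dz$. Your explicit check that $Q_z^*=Q_z$, using that $A_q^\#$ is diagonal with real entries and hence self-adjoint, spells out the orthogonality of the projectors, which the paper leaves implicit.
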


\begin{proof}
For one coordinate,
\[
 \left\langle D_{q,x}x^r,x^{r-1}\right\rangle_q
 =\qnum r\qfac{r-1}=\qfac r
 =\left\langle x^r,xx^{r-1}\right\rangle_q.
\]
Tensoring this identity over the coordinates and using $\eps_i^*=\iota_i$ gives \eqref{eq:fischer-adjoints}.  Positivity follows from
$A_q=\Dz\Dz^*+\Dz^*\Dz$ and the analogous identity for $B_q$.
\end{proof}

\begin{corollary}[Local scalar harmonic Fischer decomposition]\label{cor:native-harmonic-fischer}
On every bidegree and spin sector,
\begin{equation}\label{eq:native-harmonic-fischer}
 \mathscr P_{r,s}^n
 =\Hcal_{q;r,s}^n\mathbin{\overset{\perp}{\oplus}}
 \rho\,\mathscr P_{r-1,s-1}^n.
\end{equation}
If $r,s>0$, the orthogonal harmonic projector is
\begin{equation}\label{eq:native-harmonic-projector}
 \Pi_{\mathrm{har}}
 =I-\rho\,(\Delta_q\rho)^{-1}\Delta_q,
\end{equation}
where $\Delta_q\rho$ is restricted to $\mathscr P_{r-1,s-1}^n$.
Consequently, the full joint decomposition also has the local radial form
\begin{equation}\label{eq:local-radial-joint-fischer}
 \mathscr P_{r,s}^n
 =\bigoplus_{j=0}^{\min(r,s)}
 \rho^j\mathscr R_{q;r-j,s-j}^n.
\end{equation}
\end{corollary}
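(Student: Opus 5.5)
The plan is to obtain the scalar decomposition from adjointness, as in \cref{cor:little-fischer}, and then combine it with \cref{thm:joint-fischer}. Everything happens inside one finite-dimensional sector. First I show that multiplication by $\rho=\sum_i z_iw_i$ and $\Delta_q=\sum_iD_{q,z_i}D_{q,w_i}$ are adjoint for $\langle\cdot,\cdot\rangle_q$. By \cref{prop:fischer-adjoints}, $D_{q,z_i}^*=z_i$ and $D_{q,w_i}^*=w_i$. Since $D_{q,z_i}$ and $D_{q,w_i}$ commute, $(D_{q,z_i}D_{q,w_i})^*=w_iz_i=z_iw_i$, so $\Delta_q^*=\rho$. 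As maps,
\[
 \Delta_q:\mathscr P_{r,s}^n\to\mathscr P_{r-1,s-1}^n,
 \qquad
 \rho:\mathscr P_{r-1,s-1}^n\to\mathscr P_{r,s}^n .
\]
Both preserve spin degree, and $\rho$ is exactly the restriction of $\Delta_q^*$. Finite-dimensional linear algebra then gives $\mathscr P_{r,s}^n=\ker\Delta_q\oplus^\perp\im\Delta_q^*=\Hcal_{q;r,s}^n\oplus^\perp\rho\,\mathscr P_{r-1,s-1}^n$, which is \eqref{eq:native-harmonic-fischer}. When $r=0$ or $s=0$ the second summand is zero and $\Delta_q$ vanishes on the sector, so the statement holds trivially.

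For the projector \eqref{eq:native-harmonic-projector} with $r,s>0$, I need $\Delta_q\rho$ to be invertible on $\mathscr P_{r-1,s-1}^n$. It equals $\rho^*\rho$ restricted there, and $\rho$ is injective on polynomials because $\C[z,w]$ is an integral domain. Hence $\langle \rho^*\rho u,u\rangle_q=\|\rho u\|^2>0$ for $u\neq0$, so $\Delta_q\rho$ is positive definite and therefore invertible. Put $P=\rho(\Delta_q\rho)^{-1}\Delta_q$. Then
\[
 P^2=\rho(\Delta_q\rho)^{-1}(\Delta_q\rho)(\Delta_q\rho)^{-1}\Delta_q=P .
\]
Self-adjointness of $P$ follows from $\rho^*=\Delta_q$ and from the self-adjointness of $(\rho^*\rho)^{-1}$. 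The range of $P$ is $\rho\,\mathscr P_{r-1,s-1}^n$, since $\Delta_q$ is onto $\mathscr P_{r-1,s-1}^n$ (it is adjoint to the injective map $\rho$). The kernel of $P$ is $\ker\Delta_q$. So $P$ is the orthogonal projector onto $\rho\,\mathscr P_{r-1,s-1}^n$, and $I-P$ is the orthogonal projector onto the harmonic space.

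For \eqref{eq:local-radial-joint-fischer}, I iterate \eqref{eq:native-harmonic-fischer}:
\[
 \mathscr P_{r,s}^n=\bigoplus_{j}\rho^j\Hcal_{q;r-j,s-j}^n .
\]
This sum is direct because each step splits off an orthogonal summand, and $\rho^j$ is injective. Next I replace each harmonic space by $\mathscr R_{q;r-j,s-j}^n$ using \eqref{eq:harmonic-joint-fischer}. Here I have to make sure that \eqref{eq:harmonic-joint-fischer} identifies $\Hcal_{q;a,b}^n$ itself with the four-term space $\mathscr R_{q;a,b}^n$. It does: the left-hand side of \eqref{eq:harmonic-joint-fischer} is defined as $\ker\Delta_q\cap\mathscr P_{a,b}^n$ with the local $\Delta_q$, and by \eqref{eq:transport-laplacian} this is the same space as the transported harmonic space. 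So $\mathscr R_{q;a,b}^n=\Hcal_{q;a,b}^n$, and substituting gives the local radial form.

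The only delicate point is this compatibility: the local harmonic space built with $\rho$ and the companion one built with $\hq$ coincide, because both are $\ker\Delta_q$. The complements $\rho^j$ and $\hq^j$ differ, but each gives a valid direct sum.
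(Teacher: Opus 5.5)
Your proposal is correct and follows essentially the same route as the paper. Both arguments use the adjointness $\Delta_q^*=\rho$ from \cref{prop:fischer-adjoints}, the finite-dimensional splitting $\ker\Delta_q\oplus^\perp\im\Delta_q^*$, positivity of $\Delta_q\rho=\rho^*\rho$ on the source sector for the projector, and iteration combined with \cref{thm:joint-fischer}; the differences are minor: you get $\Delta_q^*=\rho$ coordinatewise rather than from $\{\Dz,\Dw\}^*=\{\Z,\Zd\}$, and you spell out the idempotence, self-adjointness, range and kernel of the projector that the paper calls standard.
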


\begin{proof}
Equation \eqref{eq:fischer-adjoints} and the two factorizations in
\cref{thm:native-algebra} give
\[
 \Delta_q^*=\{\Dz,\Dw\}^*
 =\{\Z,\Zd\}=\rho.
\]
Multiplication by the nonzero polynomial $\rho$ is injective.  In finite
bidegree, $\Delta_q$ is therefore surjective onto its target, and
$\im\rho=(\ker\Delta_q)^\perp$.  This proves
\eqref{eq:native-harmonic-fischer}.  The operator $\Delta_q\rho$ is positive
definite on the source sector because
$\langle\Delta_q\rho u,u\rangle_q=\|\rho u\|_q^2$; the standard formula for
the orthogonal projection along $\im\rho$ gives
\eqref{eq:native-harmonic-projector}.  Iterating
\eqref{eq:native-harmonic-fischer} and applying
\cref{thm:joint-fischer} to each harmonic term proves
\eqref{eq:local-radial-joint-fischer}.
\end{proof}

\begin{remark}[Two radial complements]\label{rem:two-radial-complements}
Equations \eqref{eq:full-joint-fischer} and
\eqref{eq:local-radial-joint-fischer} are different direct decompositions of
the same polynomial sector.  The former is the divided-power transport of the
classical decomposition and uses $\hq=G_q\rho G_q^{-1}$.  The latter uses
ordinary multiplication by the lattice-local radius $\rho$ and is orthogonal
at the scalar harmonic step.  The two radial variables agree at $q=1$ but not
in positive degree when $q\ne1$.
\end{remark}

Let $\mathscr H_q^{(m)}$ be the Hilbert completion.  Put
\begin{equation}\label{eq:q-exponential}
 \mathrm e_q(t):=\sum_{r=0}^\infty\frac{t^r}{\qfac r},
 \qquad
 R_q:=(1-q)^{-1/2},
\end{equation}
and let
\begin{equation}\label{eq:q-polydisc}
 \Omega_q:=\{(z,w)\in\C^{2m}:|z_i|<R_q, |w_i|<R_q\text{ for all }i\}.
\end{equation}
The one-variable space with weights $\qfac r$ is the generalized $q$-Fock space studied in \cite{AlpayCerejeirasKahlerSchneider2024}.  The tensor-product kernel of our spinor space is
\begin{equation}\label{eq:full-kernel}
 K_q((z,w),(\zeta,\omega))
 =\prod_{i=1}^m\mathrm e_q(z_i\overline{\zeta_i})
  \mathrm e_q(w_i\overline{\omega_i})\,I_{\Fcal_m}.
\end{equation}

\begin{theorem}[$q$-Fock realization]\label{thm:q-fock-realization}
The completion $\mathscr H_q^{(m)}$ is a reproducing-kernel Hilbert space of
$\Fcal_m$-valued holomorphic functions on $\Omega_q$ with kernel
\eqref{eq:full-kernel}.  All four local operators in
\eqref{eq:four-native-operators} extend boundedly to this space.  The generalized inverses $A_q^\#,B_q^\#$ and the four Fischer projectors also extend boundedly, and
\begin{align}
 \Kcal_z&:=\ker\Dz,
 &\Kcal_w&:=\ker\Dw,
 &\Mcal_q&:=\ker\Dz\cap\ker\Dw
\end{align}
are closed subspaces.
\end{theorem}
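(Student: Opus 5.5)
The plan treats the completion as a weighted $\ell^2$ space of coefficients and checks each claim directly. By \eqref{eq:fischer-inner-product} the normalized monomial spinors
\[
 \frac{z^\alpha w^\beta e_S}{\sqrt{\qfac\alpha\,\qfac\beta}}
\]
form an orthonormal basis of $\mathscr H_q^{(m)}$. So an element is a formal series $f=\sum c_{\alpha,\beta,S}z^\alpha w^\beta e_S$ with $\sum|c|^2\qfac\alpha\qfac\beta<\infty$.

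\emph{Convergence on $\Omega_q$.} By Cauchy--Schwarz,
\[
 \sum|c_{\alpha\beta S}|\,|z^\alpha w^\beta|
 \le\|f\|_q\Bigl(\sum_{\alpha,\beta}\frac{|z^\alpha|^2|w^\beta|^2}{\qfac\alpha\qfac\beta}\Bigr)^{1/2}
 =\|f\|_q\prod_i\mathrm e_q(|z_i|^2)^{1/2}\mathrm e_q(|w_i|^2)^{1/2}.
\]
Since $\qfac r^{1/r}\to(1-q)^{-1}$, the series $\mathrm e_q(t)$ has radius of convergence $(1-q)^{-1}=R_q^2$. The bound is therefore locally uniform on $\Omega_q$, and each $f$ defines a holomorphic $\Fcal_m$-valued function there. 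Point evaluation $f\mapsto\langle f(z,w),v\rangle_{\Fcal_m}$ is bounded, and distinct coefficient sequences give distinct functions by uniqueness of power series. The kernel is then the standard sum $\sum_j\phi_j(x)\overline{\phi_j(y)}$ over the orthonormal basis. Coordinatewise this sum factorizes into products of $\sum_r(z_i\overline{\zeta_i})^r/\qfac r=\mathrm e_q(z_i\overline{\zeta_i})$, and summing over the orthonormal basis $e_S$ gives the identity on $\Fcal_m$. This yields \eqref{eq:full-kernel}.

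\emph{Boundedness of the local operators.} As in the proof of \cref{thm:analytic-CK}, $D_{q,x}$ acts on normalized one-variable monomials as a weighted backward shift with weights $\sqrt{\qnum r}\le(1-q)^{-1/2}$. By \cref{prop:fischer-adjoints}, multiplication by $x$ is its adjoint, a forward shift with the same weights. Tensoring with identities on the other variables preserves these norms. The operators $\eps_i,\iota_i$ have norm at most $1$, so each of $\Dz,\Dw,\Z,\Zd$ has norm at most $m(1-q)^{-1/2}$ on polynomials and extends by density. The adjoint relations $\Dz^*=\Z$ and $\Dw^*=\Zd$ persist on the completion.

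\emph{The generalized inverses and projectors.} This is the step that needs an actual estimate. $A_q^\#$ is diagonal in the orthonormal basis, so its norm is the supremum of the reciprocals of the nonzero eigenvalues in \eqref{eq:A-spectrum}. Each nonzero eigenvalue is a sum of $q$-numbers in which at least one $\qnum{k}$ has $k\ge1$, hence it is at least $\qnum1=1$. Therefore $\|A_q^\#\|\le1$, and likewise $\|B_q^\#\|\le1$. The projectors $Q_z=\Z A_q^\#\Dz$, $Q_w=\Zd B_q^\#\Dw$ and their complements $\Pi_z,\Pi_w$ are products of bounded operators, so they extend boundedly. The idempotent and self-adjoint identities pass to the closure by continuity; in fact they are orthogonal projections of norm at most $1$.

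\emph{Closedness.} Since $\Dz$ and $\Dw$ are bounded, $\Kcal_z$, $\Kcal_w$ and $\Mcal_q$ are kernels or intersections of kernels of bounded operators, hence closed. The identity $\Kcal_z=\im\Pi_z$ on the completion follows by density together with $\Dz\Pi_z=0$. I expect no real obstacle here. The only care needed is the lower bound $1$ on the nonzero Euler eigenvalues, together with confirming that the extended $\Dz$ agrees with the termwise Jackson difference of the holomorphic function, which follows from the locally uniform convergence.
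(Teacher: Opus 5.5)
Your proposal is correct and follows the paper's proof essentially step for step: you use the orthonormal basis $z^\alpha w^\beta e_S/(\qfac\alpha\qfac\beta)^{1/2}$, obtain the kernel as a sum of rank-one kernels converging exactly on $\Omega_q$, bound $D_{q,x}$ and multiplication by $x$ by $(1-q)^{-1/2}$ as weighted shifts, use the lower bound $1$ on the nonzero eigenvalues of $A_q,B_q$ to get $\|A_q^\#\|,\|B_q^\#\|\le1$, and conclude closedness from kernels of bounded operators. The extra details you supply (bounded point evaluations via Cauchy--Schwarz, and injectivity via uniqueness of power series) are what the paper leaves implicit; the only blemish is that your Cauchy--Schwarz bound drops a harmless factor $2^{m/2}$ from the sum over $S$, which disappears if you use the $\Fcal_m$-norm of the coefficient of $z^\alpha w^\beta$ instead.
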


\begin{proof}
The orthonormal basis consists of
$z^\alpha w^\beta e_S/(\qfac\alpha\qfac\beta)^{1/2}$.  Summing its rank-one kernels gives \eqref{eq:full-kernel}.  Since
$\qnum r\to(1-q)^{-1}$, the diagonal series converges exactly when each
$|z_i|,|w_i|<R_q$.

On the normalized one-variable basis, both $D_{q,x}$ and multiplication by
$x$ are weighted shifts with weights $\sqrt{\qnum r}$ and
$\sqrt{\qnum{r+1}}$, respectively.  Their norms are at most
$(1-q)^{-1/2}$.  The exterior operators have norm one, so the finite sums in \eqref{eq:four-native-operators} are bounded.  The positive nonzero eigenvalues of $A_q$ and $B_q$ are at least one by \eqref{eq:A-spectrum}--\eqref{eq:B-spectrum}; hence
$\|A_q^\#\|,\|B_q^\#\|\le1$.  The projector formulas are therefore bounded, and kernels of bounded operators are closed.
\end{proof}

\begin{theorem}[Polarized and joint reproducing kernels]\label{thm:projected-kernels}
The polarized spaces $\Kcal_z$ and $\Kcal_w$ have reproducing kernels
\begin{align}
 K_{q,z}(x,y)&=\Pi_z^{(x)}K_q(x,y), \label{eq:z-kernel}\\
 K_{q,w}(x,y)&=\Pi_w^{(x)}K_q(x,y), \label{eq:w-kernel}
\end{align}
where $x=(z,w)$ and the indicated projector acts in the first variable.
Let
\begin{equation}\label{eq:alternating-projection}
 P_{q,\mathrm{mon}}
 :=\operatorname*{s-lim}_{k\to\infty}(\Pi_z\Pi_w)^k.
\end{equation}
Then $P_{q,\mathrm{mon}}$ is the orthogonal projection onto $\Mcal_q$, and
\begin{equation}\label{eq:joint-kernel}
 K_{q,\mathrm{mon}}(x,y)
 :=P_{q,\mathrm{mon}}^{(x)}K_q(x,y)
\end{equation}
is the reproducing kernel of the joint Jackson-monogenic $q$-Fock space.
\end{theorem}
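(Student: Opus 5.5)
The plan rests on one general principle for reproducing-kernel Hilbert spaces. If $\mathscr H$ has kernel $K$ and $P$ is the orthogonal projection onto a closed subspace $\mathscr K$, then $\mathscr K$ is again a reproducing-kernel space, and its kernel is $K_{\mathscr K}(\cdot,y)v=P\bigl(K(\cdot,y)v\bigr)$ for every spinor $v\in\Fcal_m$. To see this, write $k_y^v:=K(\cdot,y)v$. For $f\in\mathscr K$ we have $\langle f,Pk_y^v\rangle_q=\langle Pf,k_y^v\rangle_q=\langle f,k_y^v\rangle_q=\langle f(y),v\rangle_{\Fcal_m}$, and $Pk_y^v\in\mathscr K$. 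Uniqueness of reproducing kernels then identifies $P^{(x)}K_q(x,y)$ with the kernel of $\mathscr K$.

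I apply this principle in the space $\mathscr H_q^{(m)}$ of \cref{thm:q-fock-realization}. The first task is to show that the bounded extensions of $\Pi_z$ and $\Pi_w$ are the orthogonal projections onto $\Kcal_z$ and $\Kcal_w$. By \cref{prop:fischer-adjoints} and \cref{thm:native-projectors}, on each finite sector $\Pi_z$ is a self-adjoint idempotent with range $\ker\Dz$. The sectors $\mathscr P^n_{r,s}$ are mutually orthogonal and together span a dense subspace, and each of $\Dz$, $\Z$, $A_q^\#$ maps a sector into a sector. So the bounded extension is still a self-adjoint idempotent, hence an orthogonal projection. For its range I argue as follows.
\begin{itemize}
\item If $f\in\ker\Dz$, then $\Dz f_{r,s,n}=0$ for every sector component, since $\Dz$ maps distinct sectors into distinct sectors. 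Hence $\Pi_zf=f$.
\item Conversely, $\Dz\Pi_z=0$ holds on the dense set of polynomials, so by continuity it holds everywhere.
\end{itemize}
Together these give $\im\Pi_z=\Kcal_z$. The same argument works for $\Pi_w$. The principle above then yields \eqref{eq:z-kernel} and \eqref{eq:w-kernel}.

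For the joint kernel I would invoke von Neumann's alternating projection theorem. For two orthogonal projections $P_1,P_2$ on a Hilbert space, $(P_1P_2)^k$ converges strongly to the orthogonal projection onto $\im P_1\cap\im P_2$. Here that intersection is $\Kcal_z\cap\Kcal_w=\Mcal_q$, so \eqref{eq:alternating-projection} exists and equals the projection onto $\Mcal_q$. The principle then gives \eqref{eq:joint-kernel}.

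The main subtlety I expect is the meaning of ``acting in the first variable'' in these formulas. $P^{(x)}K_q(x,y)$ should be read as $P$ applied to the $\mathscr H_q^{(m)}$-valued function $x\mapsto K_q(x,y)v$, for each fixed $y\in\Omega_q$ and $v\in\Fcal_m$. This function is indeed in $\mathscr H_q^{(m)}$, since its norm squared is $K_q(y,y)$, which is finite on $\Omega_q$. A second, minor point is that the strong limit commutes with evaluation at $x$, because point evaluation is continuous in a reproducing-kernel Hilbert space.

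A more concrete alternative, closer to the paper's spirit, avoids von Neumann's theorem by using the sector decomposition. Every one of the projections preserves each finite-dimensional sector $\mathscr P^n_{r,s}$. On a finite-dimensional space, $(\Pi_z\Pi_w)^k$ converges to the projection onto the intersection of the ranges, because the cosine of the Friedrichs angle between $\ker\Dz$ and $\ker\Dw$ restricted to the sector is less than one. Passing from this sector-wise convergence to strong convergence on the whole space is then a dominated-convergence step, since every operator involved has norm at most $1$.
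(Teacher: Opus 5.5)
Your proposal is correct and follows the paper's proof. The paper likewise combines the range-kernel principle $P^{(x)}K_q(x,y)$ for orthogonal projections in a reproducing-kernel Hilbert space with von Neumann's alternating projection theorem applied to $\Pi_z,\Pi_w$; your sector-wise check that the bounded extensions of $\Pi_z,\Pi_w$ are the orthogonal projections onto $\Kcal_z,\Kcal_w$ in the completion, and your finite-dimensional Friedrichs-angle variant, spell out a step the paper leaves to \cref{thm:native-projectors,prop:fischer-adjoints,thm:q-fock-realization}.
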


\begin{proof}
If $P$ is an orthogonal projection in a reproducing-kernel Hilbert space, the range of $P$ has kernel $P^{(x)}K(x,y)$.  Apply this to the orthogonal projectors from \cref{thm:native-projectors,prop:fischer-adjoints} to obtain
\eqref{eq:z-kernel}--\eqref{eq:w-kernel}.  Von Neumann's alternating projection theorem (see, for example, \cite{Bauschke2001}) gives the strong limit in \eqref{eq:alternating-projection} and identifies it with the orthogonal projection onto
$\im\Pi_z\cap\im\Pi_w=\ker\Dz\cap\ker\Dw$.  Applying the same range-kernel principle proves \eqref{eq:joint-kernel}.
\end{proof}

\begin{remark}[Joint kernel]\label{rem:kernel-algorithm}
Equations \eqref{eq:z-kernel}--\eqref{eq:w-kernel} are explicit spectral formulas because the eigenvalues of $A_q$ and $B_q$ are given in
\eqref{eq:A-spectrum}--\eqref{eq:B-spectrum}.  Formula \eqref{eq:joint-kernel} gives the joint kernel by a convergent sequence of projections.  We do not obtain a closed basic-hypergeometric expression for it.
\end{remark}

\paragraph{Classical limit.}
For every fixed polynomial degree,
\begin{equation}\label{eq:classical-limit}
 D_{q,x}\longrightarrow\partial_x,
 \quad
 A_q\longrightarrow E_z+\mathsf N,
 \quad
 B_q\longrightarrow E_w+m-\mathsf N,
 \quad
 G_q\longrightarrow I
 \qquad(q\to1).
\end{equation}
Thus the local and companion coordinates agree in the limit, the Euler
spectrum depends only on total degree and spin degree, the two-face recursion
becomes the classical Hermitian CK recursion, and the reproducing kernel tends
coefficientwise to
\[
 \exp(z\cdot\overline\zeta+w\cdot\overline\omega)I_{\Fcal_m}.
\]

\section{Conclusion}\label{sec:conclusion}

Coordinatewise Jackson differences give a discrete $q$-Hermitian Dirac pair on a multiplicative $q$-lattice.  The pair is nilpotent and factors the mixed $q$-Laplacian.  Its anticommutators with the local coordinate multiplications have positive occupancy-dependent spectra, which give orthogonal Fischer projectors and contracting homotopies.  In rank at least two these spectra are not functions of total degree, in agreement with the reduction of the linear symmetry group to the monomial unitary group.

For the simultaneous null system we obtained polynomial and $q$-analytic two-face Cauchy--Kovalevskaya extensions.  A divided-power conjugation gives companion coordinates with the classical scalar Euler relations and hence transports the joint Hermitian Fischer decomposition and multiplicity formulas.  With the Jackson--Fischer inner product, the polynomial space completes to a vector-valued $q$-Fock space; the polarized kernels are obtained from the Fischer projectors and the joint kernel from alternating projections.  Closed formulas for the joint kernel and boundary integral formulas remain open.

\medskip
\noindent\textbf{Funding.}
This work was co-funded by the University of Ostrava, Grant No.~SGS05/P\v{R}F/2026. Yifan Zhang is also co-funded by the European Union under the REFRESH -- Research Excellence For REgion Sustainability and High-tech Industries project, No.~CZ.10.03.01/00/22003/0000048, via the Operational Programme Just Transition.

\medskip
\section*{Statements and Declarations}

\noindent\textbf{Competing interests.}
The authors declare that they have no competing financial or non-financial interests that are directly or indirectly related to this work.
\medskip

\noindent\textbf{Data availability.}
No datasets were generated or analysed during the current study.

\begingroup
\hbadness=10000
\bibliographystyle{plainnat}
\bibliography{references}
\endgroup

\end{document}